\definecolor{alert}{rgb}{0.8,0,0}
\renewcommand{\r}{\mathbb{R}}
\renewcommand{\c}{\mathbb{C}}
\newcommand{\s}{\mathbb{S}}
\newcommand{\h}{\mathbb{H}}
\renewcommand{\r}{\mathbb{R}}
\newcommand{\p}{\mathbb{P}}
\DeclareMathOperator{\Ric}{Ric}
\DeclareMathOperator{\traza}{tr}
\DeclareMathOperator{\arcos}{arcos}
\newtheorem{theorem}{Theorem}
\newtheorem{corollary}{Corollary}
\newtheorem{lemma}{Lemma}
\theoremstyle{definition}
\theoremstyle{remark}
  \newtheorem{remark}{Remark}
\numberwithin{equation}{section}
\begin{document}

\title[On hypersurfaces of $\s^2\times\s^2$]{On hypersurfaces of $\s^2\times\s^2$}

\author{Francisco Urbano}
\address{Departamento de Geometr\'{\i}a  y Topolog\'{\i}a \\
Universidad de Granada \\
18071 Granada, Spain}
\email{furbano@ugr.es}

\thanks{Research partially supported by a MINECO-FEDER grant no. MTM2014-52368-P.}



\date{}
\begin{abstract}
We classify the homogeneous and isoparametric hypersurfaces of $\s^2\times\s^2$. In the classification, besides the hypersurfaces $\s^1(r)\times\s^2,\,r\in (0,1]$, it appears a family of hypersurfaces with three different constant principal curvatures and zero Gauss-Kronecker curvature. Also we classify the hypersurfaces of $\s^2\times\s^2$ with at most two constant principal curvatures and, under certain conditions,  with three constant principal curvatures.
\end{abstract}

\maketitle

\section{Introduction}

Let $(N,g)$ be a compact $4$-dimensional Riemannian manifold and $\Phi:M\rightarrow N$ a two-sided hypersurface. We are interested in the following  properties:

\begin{enumerate}
\item $M$ is (extrinsically) locally homogeneous, i.e., for any points $p,q\in M$ there exist neighbourhoods $V$ and $W$ of $p$ and $q$ and an isometry $F$ of $N$ such that $F(\Phi(V))=\Phi(W)$.

\item $M$ has constant principal curvatures.

\item $M$ is isoparametric, i.e., there exists an isoparametric function $F:N\rightarrow\r$ such that $M=F^{-1}(t)$, for some regular value $t$ of $F$. $F$ is isoparametric if  the gradient and the Laplacian of $F$ satisfy
\[
|\nabla F|^2=f(F),\quad \Delta F=g(F),
\]
where $f,g:\r\rightarrow\r$ are smooth functions.
\end{enumerate}
When $N$ is the $4$-dimensional sphere $\s^4$ or the complex projective plane $\c\p^2$, these properties have been studied and the corresponding classifications have been done (see [C], [K], [M] and [T]). In both cases, the above three properties are equivalent and the number of possible different principal curvatures are $1,2$ or $3$ when $N=\s^4$ and $2$ or $3$ when $N=\c\p^2$.

Besides the above ambient spaces, $\s^2\times\s^2$ is the most interesting compact $4$-manifold to study its hypersurfaces. It is, together with $\c\p^2$, the only compact Hermitian symmetric $4$-manifold. 

In this paper we start the study of the above properties for the  hypersurfaces of $\s^2\times\s^2$. In section 3, we give a complete description of the most important examples, which appear in two families of isoparametric hypersurfaces. The first one, $\{\s^1(r)\times\s^2,\, r\in(0,1]\}$, is a family of homogeneous and isoparametric hypersurfaces with $1$ or $2$ constant principal curvatures.  The second one, $\{M_t,\,t\in (-1,1)\}$, with

$$M_t=\{(p,q)\in\s^2\times\s^2\,|\,<p,q>=t\},$$ is also a family of homogeneous and isoparametric hypersurfaces but with three constant principal curvatures and with Gauss-Kronecker curvature $K=0$. All these examples are tubes over distinguish totally geodesic surfaces of $\s^2\times\s^2$, and, in contrast with the cases of $\s^4$ and $\c\p^2$, the geodesic balls of $\s^2\times\s^2$ do not belong to the above families of examples.

As it is well-known, the Gauss and Codazzi equations  (and hence the curvature of $\s^2\times\s^2$) play an important role in the study of the above properties. In our case, the curvature depends of the product structure of $\s^2\times\s^2$ (see section 2 ) and so the Codazzi equation reflects the behaviour of the hypersurface with respect to the product structure.  This behaviour is described by a function $C$ (see  \eqref{C}) defined on the hypersurface and satisfying $-1\leq C\leq 1$, and so, the properties of this function will be quite important throughout the paper. This function is constant in all the above examples ( $C=1$ for the first family and $C=0$ for the second one).

The first important results in the paper, Theorem~\ref{te:Cconstant} and Corollary~\ref{co:Cconstant}, provide a local characterization of the above examples among the family of hypersurfaces of $\s^2\times\s^2$ where the function $C$ is constant. This characterization will be used  along the paper.

 In Corollary~\ref{co:homogeneous} and Corollary~\ref{co:isoparametric} we prove the following local result, which classifies the homogeneous and isoparametric hypersurfaces of $\s^2\times\s^2$:
\begin{quote}
 
\begin{enumerate}
\item { \it Open subsets of $\{\s^1(r)\times\s^2, \, r\in(0,1]\}$ and $\{M_t,\,t\in(-1,1)\}$ are, up to congruences, the only locally homogeneous orientable hypersurfaces of $\s^2\times\s^2$.
\item $\{\s^1(r)\times\s^2,\, r\in(0,1]\}$ and $\{M_t,\,t\in(-1,1)\}$ are, up to congruences,  the only isoparametric orientable hypersurfaces of $\s^2\times\s^2$}.
\end{enumerate}
\end{quote}
In fact, in Theorem~\ref{te:isoparametric} we prove a stronger result than in (2): we characterize locally the above examples as the only orientable hypersurfaces whose parallel hypersurfaces have constant mean curvature. It is well-known that this property is satisfied by any isoparametric hypersurface.

Finally in  section 6, we study the orientable hypersurfaces of $\s^2\times\s^2$ with constant principal curvatures. In Theorem~\ref{te:2curvaturas} we locally classify  them, when the number of constant principal curvatures is one or two, proving that
\begin{quote}
\begin{enumerate}
\item {\it Up to congruences, open subsets of $\s^1\times\s^2$ are the only orientable hypersurfaces of $\s^2\times\s^2$ with one constant principal curvature.
\item Up to congruences, open subsets of $\{\s^1(r)\times\s^2,\, r\in(0,1)\}$, are the only orientable hypersurfaces of $\s^2\times\s^2$ with two different constant principal curvatures}.
\end{enumerate}
\end{quote}

When the number of different principal curvatures is three, the classification problem is harder, and we have only got partial results. Using Theorem~\ref{te:3}, where we study the critical points of the function $C$ in such hypersurfaces, we prove in Corollary~\ref{co:compact} the following result:
\begin{quote}
{\it $\{M_t,\,t\in(-1,1)\}$ are, up to congruences, the only orientable compact hypersurfaces with three different constant principal curvatures,  with scalar curvature $\rho\not =1/2$ and Gauss-Kronecker curvature $K=0$}.
\end{quote}

\section{Preliminaries}
Let $\s^2$ be the $2$-dimensional unit sphere, $\langle,\rangle$ its standar metric and $J$ its complex structure defined by
\[
J_pv=p\wedge v,\quad p\in\s^2,\,v\in T_p\s^2.
\]
 We endow $\s^2\times\s^2$ with the product metric (also denoted by $\langle,\rangle$) and the complex structures
\[
J_1=(J,J),\quad J_2=(J,-J)
\]
which define two structures of K\"ahler surface on $\s^2\times\s^2$. It is clear that, if $Id:\s^2\rightarrow\s^2$ is the identity map and $F:\s^2\rightarrow\s^2$ is any anti-holomorphic isometry of $\s^2$, then  $Id\times F:(\s^2\times\s^2, J_1)\rightarrow (\s^2\times\s^2, J_2)$ is a holomorphic isometry.

The product structure $P$ on $\s^2\times\s^2$,  defined by 
\[
P(v_1,v_2)=(v_1,-v_2),\quad v_1,v_2\in T\s^2,
\]
satisfies $P=-J_1J_2=-J_2J_1$ and $\bar{\nabla}P=0$, where $\bar{\nabla}$ is the Levi-Civita connection on $\s^2\times\s^2$.

On the other hand, using that $\s^2\times\s^2$ is a product manifold, its curvature tensor $\bar{R}$ is given by
\begin{gather*}
\bar{R}(v,w,x,y)=\frac{1}{2}\{\langle v,y\rangle\langle w,x\rangle-\langle v,x\rangle\langle w,y\rangle\\
+\langle Pv,y\rangle\langle Pw,x\rangle-\langle Pv,x\rangle\langle Pw,y\rangle\},
\end{gather*}
where $v,w,x,y\in T(\s^2\times\s^2)$,
and hence $\s^2\times\s^2$ is an Einstein manifold with scalar curvature $4$ and non-negative sectional curvature.

Finally, the group of isometries of $\s^2\times\s^2$ is the $6$-dimensional subgroup of the orthogonal group $O(6)$ given by
\begin{equation}\label{isom}
\left\{ \left( \begin{array}{cc} A & 0 \\ 0 & B \end{array} \right) \,,\,\left( \begin{array}{cc} 0 & A \\ B & 0 \end{array} \right)\,/\, A,B \in \hbox{O}(3) \right\}.
\end{equation}

Let $\Phi:M\rightarrow\s^2\times\s^2$ be an orientable hypersurface of $\s^2\times\s^2$ and $N$ a unit normal vector field to $\Phi$. The behaviour of $M$ with respect  to the product structure is given by the smooth function $C$ and the vector field $X$ tangent to $M$ defined by
\begin{equation}\label{C}
\begin{split}
C:M\rightarrow  \r,\quad\quad
 &C=\langle PN,N\rangle=\langle J_1N,J_2N\rangle,\\
X=PN-CN.
\end{split}
\end{equation}

It is clear that $-1\leq C\leq 1$, that $X$ is the tangential component of $PN$ and that $|X|^2=1-C^2$.

From the Gauss equation it follows that the scalar curvature $\rho$ of $M$ is given by
\[
\rho=2+9H^2-|\sigma|^2,
\]
where $H$ is the mean curvature vector field and $\sigma$ the second fundamental form of $\Phi$.
The Codazzi equation is given by
\[
(\nabla\sigma)(v,w,x)-(\nabla\sigma)(w,v,x)=\frac{1}{2}\big(\langle X,v\rangle\langle Pw,x\rangle-\langle X,w\rangle\langle Pv,x\rangle\big),
\]
where $\nabla\sigma$ is the covariant derivative of the second fundamental form.

In the following result we describe some properties of $C$ and $X$ which will be used along the paper.

\begin{lemma}\label{le:C,X}
Let $\Phi: M\rightarrow \s^2\times\s^2$ be an orientable hypersurface and $A$ the shape operator associated to the unit normal field $N$. Then
\begin{enumerate}
\item The gradient of $C$ and the covariant derivative of $X$ are given by
\[
\nabla C=-2AX,\quad  \nabla_VX=C AV- P^{T}AV,
\]
\item The Hessian of $C$ is given by
\begin{eqnarray*}
(\nabla^2C)(V,W)=
-2(\nabla\sigma)(V,X,W)-2C\langle AV,AW\rangle+2\langle PAV,AW\rangle.
\end{eqnarray*}
\item The Laplacian of $C$ and the divergence of $X$ are given by
\[
\Delta C=-6\langle X,\nabla H\rangle-2C|\sigma|^2+2\traza (P^TA^2),\quad \hbox{div}\,X=3CH-\traza (P^TA),
\]
\end{enumerate}
where $P^T:TM\rightarrow TM$ is the tangential component of the restriction of $P$ to $M$, $\traza$ stands for the trace and $V,W$ are vector fields on $M$.
\end{lemma}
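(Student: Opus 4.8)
The plan is to derive all six formulas from three structural facts: the ambient connection is parallel to the product structure, $\bar{\nabla}P=0$; the Weingarten formula $\bar{\nabla}_V N=-AV$; and the pointwise decomposition $PN=X+CN$ together with the fact that $P$ is self-adjoint with $P^2=\mathrm{Id}$. The first two give at once $\bar{\nabla}_V(PN)=P(\bar{\nabla}_V N)=-PAV$, and throughout I will split $PAV=P^{T}AV+\langle PAV,N\rangle N$ with $\langle PAV,N\rangle=\langle AV,PN\rangle=\langle AV,X\rangle$, since $AV$ is tangent. For (1): differentiating $C=\langle PN,N\rangle$ yields $V(C)=\langle-PAV,N\rangle+\langle PN,-AV\rangle=-2\langle PN,AV\rangle=-2\langle X,AV\rangle$, hence $\nabla C=-2AX$; differentiating $X=PN-CN$ yields $\bar{\nabla}_V X=-PAV+CAV-V(C)N$, whose tangential part is $\nabla_V X=CAV-P^{T}AV$.

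For (2) I write $(\nabla^2 C)(V,W)=\langle\nabla_V(\nabla C),W\rangle=-2\langle\nabla_V(AX),W\rangle$ and expand $\nabla_V(AX)=(\nabla_V A)X+A(\nabla_V X)$. Since the normal bundle is a line, $\nabla^{\perp}N=0$, so $(\nabla\sigma)(V,W,Z)=\langle(\nabla_V A)W,Z\rangle$; this identifies $-2\langle(\nabla_V A)X,W\rangle$ with $-2(\nabla\sigma)(V,X,W)$. Substituting $\nabla_V X=CAV-P^{T}AV$ from (1) and using that $P^{T}$ and $P$ agree when paired against the tangent vector $AW$ gives the remaining terms $-2C\langle AV,AW\rangle+2\langle PAV,AW\rangle$.

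For (3) I trace the Hessian over an orthonormal frame $\{e_i\}$ of $TM$. The last two terms give immediately $-2C\sum_i|Ae_i|^2=-2C|\sigma|^2$ and $2\sum_i\langle PAe_i,Ae_i\rangle=2\,\traza(P^{T}A^2)$. For the first term I apply the Codazzi equation with $v=e_i$, $w=X$, $x=e_i$ and sum, obtaining $\sum_i(\nabla\sigma)(e_i,X,e_i)=\sum_i(\nabla\sigma)(X,e_i,e_i)+\tfrac12\sum_i\big(\langle X,e_i\rangle\langle PX,e_i\rangle-|X|^2\langle Pe_i,e_i\rangle\big)$. Here $\sum_i(\nabla\sigma)(X,e_i,e_i)=\traza(\nabla_X A)=X(\traza A)=3\langle\nabla H,X\rangle$, using $\traza A=3H$. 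The curvature sum vanishes: $\sum_i\langle X,e_i\rangle\langle PX,e_i\rangle=\langle X,PX\rangle=-C|X|^2$, since $PX=P(PN-CN)=N-CPN=(1-C^2)N-CX$; and $\sum_i\langle Pe_i,e_i\rangle=\traza P-\langle PN,N\rangle=-C$, since $P$ has eigenvalues $\pm1$ each with multiplicity $2$, so $\traza P=0$. Hence $\Delta C=-6\langle X,\nabla H\rangle-2C|\sigma|^2+2\,\traza(P^{T}A^2)$. Finally $\hbox{div}\,X=\sum_i\langle\nabla_{e_i}X,e_i\rangle=C\traza A-\sum_i\langle P^{T}Ae_i,e_i\rangle=3CH-\traza(P^{T}A)$.

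Everything here is routine differentiation; the only step requiring genuine care is the cancellation of the Codazzi (curvature) contribution in $\Delta C$, which hinges on the two facts $\traza P=0$ and $PX=(1-C^2)N-CX$, together with the normalization $\traza A=3H$.
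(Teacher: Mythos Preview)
Your proof is correct and follows exactly the approach sketched in the paper: differentiate $C$ and $X$ using $\bar{\nabla}P=0$ and the Weingarten relation to obtain (1), then derive (2) and (3) from (1) together with the Codazzi equation. The paper leaves all details to the reader; you have simply filled them in, including the cancellation of the Codazzi contribution via $\traza P=0$ and $PX=(1-C^2)N-CX$.
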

\begin{proof}
Derivating the second equation of \eqref{C} and taking into account that $P$ is parallel,  we get easily (1). Now, (2) and (3) follow easily from (1) using the Codazzi's equation.
\end{proof}
Finally, if $\Phi:M\rightarrow\s^2\times\s^2$ is an orientable hypersurface and $\{e_1,e_2,e_3\}$ is an orthonormal reference of $M$ such that $\{e_1,e_2,e_3,N\}$ is positively oriented and  $P_{ij}=\langle Pe_i,e_j\rangle$, $b_i=\langle Pe_i,N\rangle=\langle X,e_i\rangle$, then the product structure $P$, in the above reference, is written as follows
\[
P=\left( \begin{array}{cccc} P_{11} & P_{12} & P_{13} & b_1 \\ P_{21} & P_{22} & P_{23} & b_2 \\ P_{31} & P_{32} & P_{33} & b_3 \\ b_1 & b_2 & b_3 & C \end{array} \right) 
\]
As $P\in SO(4)$, $P=P^{t}$ and $\traza\,P=0$, it follows that for $i\not=j\not=k$
\begin{equation}\label{P}
\begin{split}
CP_{ii}-b_i^2=P_{jj}P_{kk}-P_{jk}^2,\quad CP_{ij}-b_ib_j=P_{ik}P_{jk}-P_{ij}P_{kk},\\
b_iP_{ij}-b_jP_{ii}=-b_kP_{kj}+b_jP_{kk}.
\end{split}
\end{equation}

\section{Examples}
In this section we are going to give the most regular examples of hypersurfaces of $\s^2\times\s^2$, some of them will be characterized in the paper.

\subsection{Hypersurfaces with function $C$ satisfying $C^2=1$.}
Given  $a\in\s^2$, let $G:\s^2\times\s^2\rightarrow \r$ be the function defined by
\[
G(p,q)=\langle p,a\rangle.
\]
Then it is easy to check that the gradient and the Laplacian of G satisfy
\[
|\bar{\nabla} G|^2=1-G^2,\quad \bar{\Delta} G=-2G.
\]
This means that $G$ is an isoparametric function on $\s^2\times\s^2$ and hence the level hypersurfaces of $G$ define a one-parameter family of hypersurfaces of $\s^2\times\s^2$ with constant mean curvature.  

In this particular case,  $G^{-1}(t)=\emptyset$  if $|t|>1$,  $G^{-1}(1)=\{a\}\times\s^2$ and $G^{-1}(-1)=\{-a\}\times\s^2$ are the focal sets, which are totally geodesic surfaces of $\s^2\times\s^2$. Finally, for $t\in(-1,1)$ we have that
\[
G^{-1}(t)=\{(p,q)\in\s^2\times\s^2\,|\,\langle p,a\rangle=t\} 
\]
is a hypersurface of $\s^2\times\s^2$ with constant mean curvature. The isometry  of $\s^2\times\s^2$ given by $-Id\times Id$ transforms $G^{-1}(-t)$ onto $G^{-1}(t)$. 
Also, it is clear that, up to congruences, we can take $a=(0,0,1)$.
So we have a family of hypersurfaces
\[
G^{-1}(t)= \s^1(r)\times\s^2,\quad r^2=1-t^2,\, t\in[0,1),\,r\in(0,1],
\]
where $\s^1(r)=\{(x,y,\sqrt{1-r^2})\in\s^2\}$.
It is trivial to check that $G^{-1}(0)=\s^1\times\s^2$ is totally geodesic and that $\s^1(r)\times\s^2,\, r\in (0,1)$, has two constant principal curvatures: $0$ with multiplicity two and $\frac{\sqrt{1-r^2}}{r}$ with multiplicity one.
 
Also, $\{\s^1(r)\times\s^2,\,r\in(0,1]\}$ are tubes of radius $\arcos\sqrt{1-r^2}$ over the focal surface $\{a\}\times\s^2$, with $a=(0,0,1)$.

Finally, the group of isometries of $\s^2\times\s^2$ given by
\[
\left\{ \left( \begin{array}{cc} A & 0 \\ 0 & B \end{array} \right) \,/\, A=\left( \begin{array}{cc} \hat{A} & 0 \\ 0 & 1 \end{array} \right), \hat{A}\in \hbox{SO}(2)\,,\,B\in\hbox{SO(3)}\right\}
\]
acts transitively on $\s^1(r)\times\s^2$ and hence these hypersurfaces are homogeneous. 
Sumarizing, we have that
\begin{quote}
{\it $\{\s^1(r)\times\s^2,\, r\in(0,1]\}$ is a family of homogeneous isoparametric hypersurfaces of $\s^2\times\s^2$ with two constant principal curvatures when $r\in (0,1)$ and totally geodesic when $r=1$.} 
\end{quote}
These hypersurfaces satisfy that $C= 1$, because the unit normal field has no component in the second factor. We remark that the isometry of $\s^2\times\s^2$ given by $(p,q)\mapsto (q,p)$ transforms $\s^1(r)\times\s^2$ onto $\s^2\times\s^1(r)$ whose function $C= -1$.

Now we are going to characterize locally the hypersurfaces satisfying $C^2= 1$. Without loss of generality we can assume that $C=1$.

If $\Phi=(\phi,\psi):M\rightarrow \s^2\times\s^2$ is an orientable hypersurface with $C=1$, then $X=0$, $PN=N$ and $J_1N=J_2N$. Hence, from Lemma~\ref{le:C,X}  we get that $A=PA$.
Moreover the tangent bundle decomposes as $TM=<J_1N>\oplus D$, where $D$ is the two-dimensional distribution orthogonal to $J_1N$. As $P(J_1N)=J_1N$, it is clear that $P_{|D}=-Id$, and so $A_{|D}=0$. Also, if $V,W$ are vector fields on $D$, we have that
\[
\langle \nabla_VW,J_1N\rangle=-\langle W,\bar{\nabla}_VJ_1N\rangle=\langle W,J_1(AV)\rangle=0,
\]
which means that $D$ is a totally geodesic foliation on $M$.  If $\Sigma$ is a leaf of $D$, it follows that $\psi:\Sigma\rightarrow \s^2$ is a local isometry, and hence 
\begin{quote}
{\it Any hypersurface of $\s^2\times\s^2$ with $C\equiv 1$ is locally the product of a integral curve of $J_1N$ in $\s^2$ and an open subset of $\s^2$}.
\end{quote}

\subsection{Hypersurfaces with three constant principal curvatures}
Let $F:\s^2\times\s^2\rightarrow\r$ be the function defined by
\[
F(p,q)=\langle p,q\rangle.
\]
Then it is not difficult to check that
\[
|\bar{\nabla} F|^2=2(1-F^2),\quad \bar{\Delta} F=-4F,
\]
and so $F$ is an isoparametric function on $\s^2\times\s^2$. Hence the level hypersurfaces of $F$ have constant mean curvature. In this case, $F^{-1}(t)$ is empty if $|t|>1$, and  the diagonal surface $F^{-1}(1)=\{(p,p)\in\s^2\times\s^2\}$ and the anti-diagonal surface $F^{-1}(-1)=\{(p,-p)\in\s^2\times\s^2\}$ of $\s^2\times\s^2$ are the focal sets of $F$.

For $t\in(-1,1)$ we have that 
\[
M_t= \{(p,q)\in\s^2\times\s^2\,|\,\langle p,q\rangle=t\}
\]
is an hypersurface of $\s^2\times\s^2$ with constant mean curvature.

The hypersurfaces $M_t$ and $M_{-t}$ are congruents because the isometry $I$ of $\s^2\times\s^2$ given by $I(p,q)=(p,-q)$ transforms $M_t$ onto $M_{-t}$.

Moreover,  the tube of radius $\arccos(t/\sqrt 2)$ over the diagonal surface $F^{-1}(1)=\{(p,p)\in\s^2\times\s^2\}$ is given by the sets of points $\{(x,y)\in\s^2\times\s^2\}$ such that

\begin{gather*}
(x,y)=\big(\cos (\frac{t}{\sqrt2})p+\sqrt2\sin(\frac{t}{\sqrt2})v,\,\cos (\frac{t}{\sqrt2})p-\sqrt2\sin(\frac{t}{\sqrt2})v\big),\\
p\in\s^2,\,v\in T_p\s^2,\,|v|=1/\sqrt2.
\end{gather*}
As $\langle x,y\rangle=\cos^2\frac{t}{\sqrt2}-\sin^2\frac{t}{\sqrt2}=\cos(\sqrt2 t)$, we obtain that the hypersurface $M_t$ is a tube of radius 
$\arccos(t/\sqrt 2)$ over the diagonal surface.

On the other hand, it is clear that $SO(3)$ acts transitively by isometries on $M_t$ by
\[
A (p,q)=(Ap,Aq), \quad A  \in SO(3),
\]
and hence $\{ M_t,\,t\in(-1,1)\}$ is a family of homogeneous hypersurfaces.

Also, the isotropy subgroup of the above action at any point of $M_t$ is the identity. So $M_t$  is diffeomorphic to $SO(3)\equiv \r\p^3$. Hence $M_t$ is a  homogeneous Riemannian manifold  and  $SO(3)$
is the group of isometries of $M_t$ when $t\not=0$ and that $SO(3)$ joint with the one-parameter group of isometries
$\{h_t:M_0\rightarrow M_0,\,t\in\mathbb{R}\}$ defined by
\[
h_t(p,q)=(tp+\sqrt{1-t^2}q, \sqrt{1-t^2}p-tq),
\]
is the group of isometries of $M_0$. We remark that $\{h_t\}$ is only well-defined on $M_0$ and that they are the restriction to $M_0$ of isometries of $O(6)$, which no define isometries of $\s^2\times\s^2$.

In [MP], the simply connected homogeneous Riemannian three-manifolds are described in detail. Following its notation, $M_0$ is the Berger projective space with $\kappa =1$ and $\tau^2 =1/2$. Also, $M_t,\,t\not=0$ is the projective space with the metric given by the parameters $c_1=2=c_2+c_3$ with $c_2=1+t$ and $c_3=1-t$.

Now, we are going to study more properties of the hypersurfaces $\{M_t\}$. It is easy to check that 
$$N_{(p,q)}=\frac{1}{\sqrt{2(1-t^2)}}(q-tp,p-tq)$$
 is a unit normal vector field to $M_t$ in $\s^2\times\s^2$ and so we have that these hypersurfaces have the function $C$ constantly zero.  Hence, if $J_i,\,i=1,2$, and $P$ are the complex structures and the product structure on $\s^2\times\s^2$, then
\begin{gather*}
J_1N_{(p,q)}=\frac{(p\wedge q,-p\wedge q)}{\sqrt{2(1-t^2)}},\quad J_2N_{(p,q)}=\frac{(p\wedge q,p\wedge q)}{\sqrt{2(1-t^2)}},\\
X=  PN_{(p,q)} =\frac{1}{\sqrt{2(1-t^2)}}(q-tp,-p+tq),
\end{gather*}
is a trivialization of $M_t$ by orthonormal vectors fields, where $\wedge$ stands for the vectorial product in $\mathbb{R}^3$. If $A$ denotes the shape operator associated to $N$, then for any  vector $(v_1,v_2)$ tangent to $M_t$, we have that
\[
A(v_1,v_2)=\frac{1}{\sqrt{2(1-t^2)}}\left\{t(v_1,v_2)-(v_2,v_1)+\langle p,v_2\rangle(p,-q)\right\}.
\]
 From here we obtain that
\[
A(J_1N)=\frac{1}{\sqrt2}\sqrt{\frac{1+t}{1-t}}\,J_1N,\quad A(J_2N)=-\frac{1}{\sqrt2}\sqrt{\frac{1-t}{1+t}}\,J_2N,\quad A\,X=0.
\]
So, $\{M_t,\,t\in(-1,1)\}$ are hypersurfaces of $\s^2\times\s^2$ with three constant principal curvatures, $\lambda_1=0,\, \lambda_2\lambda_3=-1/2$. So the Gauss-Krocneker curvature of $M_t$ is  zero. The lengths of the mean curvature vector field and the second fundamental form are given by
\[
H=\frac{\sqrt{2}t}{3\sqrt{1-t^2}},\quad |\sigma|=\sqrt{\frac{1+t^2}{1-t^2}}.
\]
Among all the $M_t$, only $M_0$ is minimal. From the Gauss equation we obtain that the sectional curvature, the Ricci tensor and the scalar curvature  of $M_t$ satisfy the following properties:
\begin{gather*}
K(J_1N\wedge J_2N)=-\frac{1}{2},\quad K(J_1N\wedge X)=K(J_2N\wedge X)=\frac{1}{2},\\
Ric(v)=\langle v,X\rangle^2\geq 0,\quad \rho=1.
\end{gather*}
We remark that the curvatures of $M_t$ do not depend of $t$. 
Sumarizing, we have that
\begin{quote}
{\it $$\{M_t=\{(x,y)\in\s^2\times\s^2\,|\,\langle x,y\rangle=t\},\, t\in (-1,1)\}$$ is a family of homogeneous isoparametric hypersurfaces of $\s^2\times\s^2$ with three  constant principal curvatures. Their Gauss-Krocneker curvatures vanish and only $M_0$ is a minimal hypersurface. Moreover, all these hypersurfaces have  $C=0$. } 
\end{quote}
\begin{remark}
The above examples can be defined in higher dimension. In fact,  if $\s^n$ is the $n$-dimensional unit sphere with its canonical metric and in $\s^n\times\s^n$ we consider the product metric, then 
\[
M_t=\{(p,q)\in\s^n\times\s^n\,|\, \langle p,q\rangle=t\},\quad t\in(-1,1)
\]
define a one-parameter family of homogeneous isoparametric hypersurfaces of $\s^n\times\s^n$ with three constant principal curvatures: $0$ with multiplicity one and $\frac{\sqrt{1+t}}{\sqrt{2(1-t)}}, -\frac{\sqrt{1-t}}{\sqrt{2(1+t)}}$ with multiplicities $n-1$.  
 \end{remark}

\subsection{Other interesting examples}

1) Given $a,b\in \s^2$ we define
\[
M_{a,b}=\{(p,q)\in\s^2\times\s^2\,|\,\langle p,a\rangle+\langle q,b\rangle=0\}.
\]
Then it is easy to check that $M_{a,b}$ is a compact hypersurface of $\s^2\times\s^2$ with two isolated singularities: $(a,-b),(-a,b)$. Outside of these singularities, a unit normal vector field to $M_{a,b}$ is defined by
\[
N(p,q)=\frac{1}{\sqrt{2(1-\langle p,a\rangle^2)}}(a-\langle p,a\rangle p,b-\langle q,b\rangle q),
\]
and hence the function $C=\langle PN,N\rangle=0$.

Also it is straighforward to see that the orthonormal reference $\{X, E_2=(J_1N+J_2N)/\sqrt{2}, E_3=(J_1N-J_2N)/\sqrt{2}\}$ diagonalizes the second fundamental form with
\[
AX=0,\quad AE_2=\frac{\langle p,a\rangle}{\sqrt{2(1-\langle p,a\rangle^2)}}E_2,\quad AE_3=\frac{-\langle p,a\rangle}{\sqrt{2(1-\langle p,a\rangle^2)}}E_3.
\]
Hence $M_{a,b}$ is a minimal hypersurface with Gauss-Kronecker curvature $K=0$ and  with scalar curvature $$\rho(p,q)=\frac{2-3\langle p,a\rangle^2}{1-\langle p,a\rangle^2},\quad -\infty<\rho\leq 2.$$
A parametrization of $M_{a,b}$ when $a=(0,0,1),\, b=(0,0,-1)$,  is given by the triply periodic ramified immersion
\begin{gather*} 
\Phi:\r^3\rightarrow \s^2\times\s^2\\
\Phi(t,r,s)=\cos(\frac{t}{\sqrt 2})\big((\cos r,\sin r,0),(\cos s,\sin s,0)\big)+\sin (\frac{t}{\sqrt 2})(a,-b).
\end{gather*}
2) Given $a,b\in \s^2$ we define
\[
\hat{M}_{a,b}=\{(p,q)\in\s^2\times\s^2\,|\,\langle p,a\rangle^2+\langle q,b\rangle^2=1\}.
\]
Then it is easy to check that $\hat{M}_{a,b}$ is a compact hypersurface of $\s^2\times\s^2$ with four curves of singularities
\[
\{(p,\pm b)\,|\,\langle p,a\rangle=0\},\quad \{(\pm a,q)\,|\,\langle q,b\rangle=0\}.
\]
Outside of these singularities, a unit normal vector field to $\hat{M}_{a,b}$ is defined by
\[
N(p,q)=\frac{1}{\sqrt{2}\,|\langle p,a\rangle\langle q,b\rangle|}\big(\langle p,a\rangle(a-\langle p,a\rangle p),\langle q,b\rangle(b-\langle q,b\rangle q)\big),
\]
and hence the function $C=\langle PN,N\rangle=0$.

Also it is straighforward to see that the orthonormal reference $\{X, E_2=(J_1N+J_2N)/\sqrt{2}, E_3=(J_1N-J_2N)/\sqrt{2}\}$ diagonalizes the second fundamental form with
\[
AX=0,\quad AE_2=\frac{\langle p,a\rangle^2}{\sqrt{2}\,|\langle p,a\rangle\langle q,b\rangle|}E_2,\quad AE_3=\frac{\langle q,b\rangle^2}{\sqrt{2}\,|\langle p,a\rangle\langle q,b\rangle|}E_3.
\]
Hence $\hat{M}_{a,b}$ is a hypersurface with Gauss-Kronecker curvature $K=0$, with constant curvature $1/2$ and the length of the mean curvature vector field is given by
\[
H(p,q)=\frac{1}{3\sqrt{2}\,|\langle p,a\rangle\langle q,b\rangle|}.
\]
A parametrization of $\hat{M}_{a,b}$, when $a=b=(0,0,1)$, is given by the triply periodic ramified immersion
\begin{gather*}
\Phi=(\phi,\psi): \r^3\rightarrow\s^2\times\s^2\\
\phi(t,r,s)=\frac{\cos(\frac{t}{\sqrt{2}})-\sin(\frac{t}{\sqrt{2}})}{\sqrt{2}}(\cos r,\sin r,0)+(0,0,\frac{\cos(\frac{t}{\sqrt{2}})+\sin(\frac{t}{\sqrt{2}})}{\sqrt{2}}),\\
\psi(t,r,s)=\frac{\cos(\frac{t}{\sqrt{2}})+\sin(\frac{t}{\sqrt{2}})}{\sqrt{2}}(\cos s,\sin s,0)+(0,0,\frac{\cos(\frac{t}{\sqrt{2}})-\sin(\frac{t}{\sqrt{2}})}{\sqrt{2}}).
\end{gather*}
\section{Characterizations of the examples. Homogeneous hypersurfaces}

In the next result we give  two local characterizations of the examples defined in section 3.

\begin{theorem}\label{te:Cconstant}
Let $\Phi:M\rightarrow\s^2\times\s^2$ be an orientable hypersurface with $C=\langle PN,N\rangle$ constant, where $N$ is a unit normal field to $\Phi$.  Then
\begin{enumerate}
\item $M$ has constant mean curvature if and only if either
\begin{enumerate}
\item $C^2=1$ and $\Phi(M)$ is congruent to an open set of $\s^1(r)\times\s^2$ for some $r\in(0,1]$,
\item or $C=0$ and $\Phi(M)$ is congruent to an open set of $M_t$ for some $t\in(-1,1)$,
\item or $C=0$ and $M$ is a non-compact minimal hypersurface with non-constant scalar curvature.
\end{enumerate}

\item $M$ has constant scalar curvature if and only if either
\begin{enumerate}
\item $C^2=1$ and $\Phi(M)$ is congruent to an open set of $\s^1(r)\times\s^2$ for some $r\in(0,1]$,
\item or $C=0$ and $\Phi(M)$ is congruent to an open set of $M_t$ for some $t\in(-1,1)$,
\item or $C=0$ and $M$ is a non-complete hypersurface with constant curvature $1/2$ and non-constant mean curvature.
\end{enumerate}
\end{enumerate}
\end{theorem}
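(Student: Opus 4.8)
The plan is to start from the first identity of Lemma~\ref{le:C,X}, $\nabla C=-2AX$: since $C$ is constant, $AX\equiv0$, so wherever $X\neq0$ the vector $X$ is a principal direction with principal curvature $0$, and in every case the Gauss--Kronecker curvature of $M$ vanishes. This forces the dichotomy $C^2=1$ (i.e. $X\equiv0$) versus $C^2<1$ (i.e. $X$ nowhere zero), which I treat separately.

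If $C^2=1$ we may assume $C\equiv1$. Then section 3 identifies $\Phi$ locally with a Riemannian product $\gamma\times U$, $\gamma$ an integral curve of $J_1N$ in $\s^2$ and $U$ open in $\s^2$, with shape operator $\mathrm{diag}(\kappa,0,0)$, $\kappa$ the geodesic curvature of $\gamma$. Since $3H=\kappa$, constancy of $H$ amounts to $\gamma$ being a circle, i.e. to $\Phi(M)$ being congruent to an open set of some $\s^1(r)\times\s^2$; this gives (1a), and together with the product description it gives (2a).

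Assume now $C^2<1$ and work in the adapted orthonormal frame $e_1=X/|X|,\,e_2,\,e_3$ with $Ae_1=0,\ Ae_2=\lambda e_2,\ Ae_3=\mu e_3$, so $3H=\lambda+\mu$ and $\rho=2+2\lambda\mu$. The relations \eqref{P} pin $P^T$ down to $P^Te_1=-Ce_1,\ P^Te_2=pe_2+qe_3,\ P^Te_3=qe_2-pe_3$ with $p^2+q^2=1$. Plugging $X=|X|e_1$ (with $|X|=\sqrt{1-C^2}$ constant) into $\nabla_VX=CAV-P^TAV$ computes every connection coefficient $\langle\nabla_{e_i}e_1,e_j\rangle$ as an explicit function of $\lambda,\mu,p,q,C$; in particular $\bar{\nabla}_{e_1}e_1=\nabla_{e_1}e_1=0$, so $M$ is ruled by geodesics of $\s^2\times\s^2$, namely the integral curves of $X$. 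Since $AX=0$ forces $PAV=P^TAV$, the vanishing of $\nabla^2C$ in Lemma~\ref{le:C,X}(2) becomes $(\nabla_VA)e_1=\frac{1}{|X|}A(P^T-C\,\mathrm{Id})AV$, while the Codazzi equation supplies the skew part $(\nabla_VA)W-(\nabla_WA)V=\frac12(\langle X,V\rangle P^TW-\langle X,W\rangle P^TV)$; together with the Gauss equation these close into an overdetermined first-order system for $\lambda,\mu,p,q$ and the single remaining connection coefficient $\langle\nabla_{e_i}e_2,e_3\rangle$.

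The core of the proof is to run this system against the extra hypothesis. In the CMC case, substituting $\nabla H=0$ into the Laplacian formula of Lemma~\ref{le:C,X}(3) already yields $C(\lambda^2+\mu^2)=p(\lambda^2-\mu^2)$, and differentiating $\lambda+\mu$ (in the CSC case, $\lambda\mu$, which is what constancy of $\rho$ controls) along the frame directions and inserting the system produces further polynomial identities in $\lambda,\mu,p,q,C$. The main obstacle is to conclude from these that $C=0$ — i.e. that there is no CMC or CSC hypersurface with $C$ equal to a constant in $(-1,1)\setminus\{0\}$. Granting $C=0$, one analyses what remains: either the principal curvatures are forced to be constant, whence a short Codazzi--Gauss computation gives $\lambda\mu=-\frac12$ and, by the uniqueness (Bonnet-type) theorem for hypersurfaces of the symmetric space $\s^2\times\s^2$ — matching the second fundamental form and the $P$-data with those of section 3 — $\Phi(M)$ is congruent to an open set of some $M_t$, giving (1b) and (2b); or they vary, in which case the equations force $\lambda+\mu\equiv0$ in the CMC case (so $M$ is minimal with $\rho=2-2\lambda^2$ non-constant, and an ODE/maximum-principle argument along the ruling geodesics shows $M$ cannot be compact: this is (1c), realized by the $M_{a,b}$ of section 3), and force $\lambda\mu\equiv\frac12$ in the CSC case (so, since $C=0$, the Gauss equation makes every sectional curvature equal to $\frac12$, while $\lambda+\mu$ is non-constant and $M$ non-complete: this is (2c), realized by the $\hat{M}_{a,b}$). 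The converse implications are read off directly from the computations in section 3, so the whole difficulty is concentrated in the algebra ruling out intermediate constant values of $C$ and in the completeness analysis of the exceptional families.
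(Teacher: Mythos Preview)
Your outline has the right architecture, but it leaves unproved precisely the step you yourself call ``the main obstacle'': showing that a constant value $C=c_0\in(-1,1)$ forces $c_0=0$. You write ``granting $C=0$'' and then proceed; that is the heart of the theorem, not a technicality to be granted. The paper does carry this out, and the key to making the algebra close is a different frame from yours: instead of diagonalizing $A$ (which introduces the unknowns $p,q$ in $P^T$ and may fail to be smooth where $\lambda=\mu$), the paper takes
\[
E_1=\frac{X}{\sqrt{1-c_0^2}},\quad E_2=\frac{J_1N+J_2N}{\sqrt{2(1+c_0)}},\quad E_3=\frac{J_1N-J_2N}{\sqrt{2(1-c_0)}},
\]
so that $P^T=\mathrm{diag}(-c_0,1,-1)$ is \emph{known} and all the freedom sits in $\sigma_{22},\sigma_{23},\sigma_{33}$. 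The parallelism of $J_1,J_2$ then yields explicit formulas for the connection and, via Codazzi, closed ODEs for $X(\sigma_{ij})$. In the CMC case one gets $c_0|\sigma|^2=3H(\sigma_{22}-\sigma_{33})$; if $H\neq0$, differentiating this along $X$ forces $|\sigma|^2$ and then all $\sigma_{ij}$ to be constant, and a short elimination gives $c_0=0$. In the CSC case, differentiating $\rho=2+2(\sigma_{22}\sigma_{33}-\sigma_{23}^2)$ twice along $X$ yields the clean obstruction $(\rho-1)(\rho-3)+c_0^2=0$, pinning $\rho$ to $2\pm\sqrt{1-c_0^2}$, and each branch is then chased to $c_0=0$. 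None of this is visible in your $p,q$-frame, and you do not indicate how you would reach the analogous conclusions there.

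Two further points. First, your identification of the rigid case with $M_t$ via a ``Bonnet-type uniqueness theorem'' is not what the paper does and would need justification; the paper instead computes the parallel hypersurfaces, finds a focal surface, shows it is totally geodesic, $J_1$-complex and $J_2$-Lagrangian, and quotes the classification of such surfaces to recognize the diagonal --- hence $M$ is a tube over it, i.e.\ an $M_t$. Second, your completeness/compactness claims for (1c) and (2c) are only gestured at. The paper's arguments are concrete: for (1c) one shows $\sigma_{22}$ and $\sigma_{23}$ are \emph{harmonic} on $M$, so compactness would make them constant and one derives a contradiction (or lands on $M_0$); for (2c) one computes $\mathrm{div}\big((\mathrm{div}\,X)\,X\big)=-2$, so the divergence theorem rules out compactness, and Myers then rules out completeness since the sectional curvature is identically $1/2$.
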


\begin{remark} 
\begin{enumerate}
\item The family of minimal hypersurfaces in item (1c)  is not empty, because the hypersurface $M_{a,b}$ given in section 3.3 is a non-complete minimal hypersurface with non-constant scalar curvature and with $C=0$.

\item The family of hypersurfaces in item (2c) is also not empty, because  the hypersurface $\hat{M}_{a,b}$ given in section 3.3 is a non-complete hypersurface with constant curvature $1/2$ , non-constant mean curvature and with $C=0$.

\end{enumerate}
\end{remark}
\begin{corollary}\label{co:Cconstant}

\begin{enumerate}

\item $\{\s^1(r)\times\s^2,\, r\in(0,1]\}$,  $\{M_t,\,t\in(-1,1)\}$ and their compact coverings are the only compact orientable constant mean curvature hypersurfaces of $\s^2\times\s^2$ with $C$ constant. 
\item $\{\s^1(r)\times\s^2,\,r\in(0,1]\}$, $\{M_t,\, t\in(-1,1)\}$ and their compact coverings are the only complete orientable hypersurfaces with constant scalar curvature of  $\s^2\times\s^2$ with $C$ constant.
\item Open subsets of $\{\s^1(r)\times\s^2,\; r\in(0,1]\}$ and $\{M_t,\; t\in(-1,1)\},$ are the only orientable hypersurfaces of $\s^2\times\s^2$ which have the mean curvature, the scalar curvature and the function $C$ constants. 
\end{enumerate}
\end{corollary}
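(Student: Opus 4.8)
The plan is to deduce Corollary~\ref{co:Cconstant} directly from Theorem~\ref{te:Cconstant}; the only non-formal input is the elementary covering-space theory needed to upgrade the \emph{local} conclusion of that theorem to a \emph{global} one. I would dispose of item (3) first, as it is immediate: if $M$ is orientable and $H$, $\rho$ and $C$ are all constant, then Theorem~\ref{te:Cconstant}(1) applies and places $M$ in one of the cases (1a), (1b), (1c); case (1c) is excluded because there the scalar curvature is not constant. Hence $\Phi(M)$ is congruent to an open set of some $\s^1(r)\times\s^2$ or of some $M_t$, which is exactly (3). (Equivalently one may invoke Theorem~\ref{te:Cconstant}(2) and discard case (2c) using constancy of $H$.)

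For item (1), let $M$ be compact and orientable with $H$ and $C$ constant. Theorem~\ref{te:Cconstant}(1) again gives (1a), (1b) or (1c), and now (1c) is impossible since that hypersurface is non-compact. So there is an isometry $\tau$ of $\s^2\times\s^2$ carrying $\Phi(M)$ into a fixed model $N_0$, where $N_0=\s^1(r)\times\s^2$ for some $r\in(0,1]$ or $N_0=M_t$ for some $t\in(-1,1)$. Then $\Psi:=\tau\circ\Phi\colon M\to N_0$ is an isometric immersion between $3$-manifolds, hence a local isometry; since $M$ is compact and $N_0$ is connected, $\Psi(M)$ is nonempty, open and closed in $N_0$, so $\Psi(M)=N_0$ and $\Psi$ is a finite-sheeted Riemannian covering. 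Conversely, any compact Riemannian covering of a model, composed with the inclusion into $\s^2\times\s^2$, is again an orientable hypersurface with $H$ and $C$ constant, since orientability passes to coverings and $H$, $C$ lift to constants. Finally, because $\pi_1(\s^1(r)\times\s^2)\cong\z$ while $M_t\cong\r\p^3$ has $\pi_1\cong\z/2\z$, the compact coverings that can occur are precisely those named in the statement. Item (2) follows the same scheme: completeness of $M$ rules out case (2c) of Theorem~\ref{te:Cconstant}(2); in the remaining cases $\Psi=\tau\circ\Phi\colon M\to N_0$ is a local isometry from a complete manifold onto a connected one, hence a Riemannian covering map; the coverings of $M_t\cong\r\p^3$ are all compact, and those of $\s^1(r)\times\s^2$ that arise are the listed ones, with the converse direction exactly as above.

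The step that carries all the content is this last one: promoting ``$\Phi(M)$ is congruent to an open subset of a model'' to ``$M$ is (a Riemannian covering of) the model'', and then enumerating which coverings occur. For item (1) this rests on the standard fact that a local diffeomorphism from a compact space onto a connected space is a covering map; for item (2) it rests on the classical theorem that a local isometry out of a complete Riemannian manifold onto a connected one is a Riemannian covering map. I expect the only place demanding care is the bookkeeping of the coverings in item (2) (making sure no unlisted complete hypersurface with $C$ constant slips through), whereas the rest --- excluding cases (1c)/(2c) via the compactness/completeness hypothesis, and checking that orientability and constancy of $H$, $\rho$, $C$ are inherited by Riemannian coverings --- is routine.
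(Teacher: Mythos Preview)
Your proposal is correct and follows the same route as the paper: deduce each item from the corresponding case analysis in Theorem~\ref{te:Cconstant}, discarding case (1c) by compactness, case (2c) by completeness, and either exceptional case by the extra constancy hypothesis in (3). The paper in fact gives no separate argument for the Corollary beyond this (the long proof block following it is the proof of Theorem~\ref{te:Cconstant} itself); your treatment is actually more explicit than the paper's, since you spell out the covering-map step needed to pass from ``$\Phi(M)$ congruent to an open subset of a model'' to ``$M$ is a compact covering of the model'', a point the paper leaves implicit.
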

\begin{proof}
Taking into account  section 3, the sufficient conditions in (1) and (2)  are clear.

In order to prove the neccesary conditions, first we suppose that $C^2=1$, and without loss of generality we consider $C=1$. In Section 3.1, we prove that  $M$ is locally congruent to the product of a curve in $\s^2$ and an open subset of $\s^2$. If the mean curvature or the scalar curvature of the hypersurface is constant, then    the curvature of the curve of $\s^2$ is also constant and so 
we obtain the case (1a) or (2a).

Now we suppose that $C=c_0\in (-1,1)$.  Then from Lemma~\ref{le:C,X} (1), it follows that $AX=0$ with $|X|^2=1-c_0^2>0$. So, at any point of $M$,  $0$ is a principal curvature of the hypersurface with corresponding eigenvector $X$. Hence, on $M$ we can consider the orthonormal reference $\{E_i,\,i=1,2,3\}$ where
\[
E_1=\frac{X}{\sqrt{1-c_0^2}},\quad E_2=\frac{J_1N+J_2N}{\sqrt{2(1+c_0)}},\quad E_3=\frac{J_1N-J_2N}{\sqrt{2(1-c_0)}}.
\]  
Using \eqref{C},  the shape operator $A$ and the tangential component of the product structure $P^{T}$ are given, with respect to this reference, by
\[
A=\left( \begin{array}{ccc} 0 &0 & 0 \\ 0 & \sigma_{22} & \sigma_{23}  \\ 0 & \sigma_{23} & \sigma_{33} \end{array} \right),\quad P^{T}=\left( \begin{array}{ccc} -c_0 &0 & 0 \\ 0 & 1 & 0  \\ 0 & 0 & -1
 \end{array} \right).
\]

Using Lemma~\ref{le:C,X} and that $J_i,\,i=1,2,$ are Kähler structures on $\s^2\times\s^2$, i.e. $\bar{\nabla}J_i=0$, it is not difficult to check that the Levi-Civita connection $\nabla$ of the induced metric  on $M$ is given by  
\begin{gather*}
\nabla_{E_1}E_i=0, \quad \nabla_{E_2}E_3=-\sqrt{1-c_0^2}\sigma_{23}E_1,\quad \nabla_{E_3}E_2=\sqrt{1-c_0^2}\sigma_{23}E_1,\\
\nabla_{E_2}E_1=-\sqrt{\frac{1-c_0}{1+c_0}}\sigma_{22}E_2+\sqrt{\frac{1+c_0}{1-c_0}}\sigma_{23}E_3,\\
\nabla_{E_3}E_1=-\sqrt{\frac{1-c_0}{1+c_0}}\sigma_{23}E_2+\sqrt{\frac{1+c_0}{1-c_0}}\sigma_{33}E_3,\\
\nabla_{E_2}E_2=\sqrt{\frac{1-c_0}{1+c_0}}\sigma_{22}E_1,\quad \nabla_{E_3}E_3=-\sqrt{\frac{1+c_0}{1-c_0}}\sigma_{33}E_1.
\end{gather*}

The knowledge of the Levi-Civita connection and the Codazzi equation, joint with  Lemma~\ref{le:C,X}, allow us to get the derivatives of the second fundamental form, obtaining 

\begin{equation}\label{derivatives}
\begin{split}
X(\sigma_{22})=\frac{1-c_0^2}{2}+(1-c_0)\sigma_{22}^2-(1+c_0)\sigma_{23}^2,\\
X(\sigma_{33})=\frac{c_0^2-1}{2}+(1-c_0)\sigma_{23}^2-(1+c_0)\sigma_{33}^2,\\
X(\sigma_{23})=(1-c_0)\sigma_{22}\sigma_{23}-(1+c_0)\sigma_{33}\sigma_{23},\\
E_2(\sigma_{33})=E_3(\sigma_{23}),\quad E_3(\sigma_{22})=E_2(\sigma_{23}).
\end{split}
\end{equation}

{\it Case (1):  the mean curvature $H$ is constant}.

In this case,  from Lemma~\ref{le:C,X},(3) it follows that $c_0|\sigma|^2=\hbox{trace}\,P^{T}A^2$. Using the above reference, this equation becomes in 
\begin{equation}\label{H-constant}
c_0|\sigma|^2=3H(\sigma_{22}-\sigma_{33}).
\end{equation}

First we are going to prove that $c_0=0$. In fact, if $H=0$, as the hypersurface cannot be totally geodesic because in such case $C^2=1$,  the equation \eqref{H-constant} says that $c_0=0$. If $H\not=0$, derivating \eqref{H-constant} with respect to $X$ and using \eqref{derivatives} and \eqref{H-constant} it is straightforward to get that 

\begin{equation}\label{sigma}
|\sigma|^2=\frac{9H^2(1+9H^2)}{c_0^2+9H^2}.
\end{equation}
In particular $|\sigma|^2$ is constant and from \eqref{H-constant} , the function $\sigma_{22}-\sigma_{33}$ is also constant. This implies, taking into account that $H$ and $|\sigma|^2$ are constant functions,  that all the functions  $\sigma_{ij}$ are constants. Using in \eqref{derivatives} that  $X(\sigma_{23})=0$ and \eqref{H-constant} we get 
\[
c_0\sigma_{23}(9H^2-|\sigma|^2)=0.
\]
If  $9H^2-|\sigma|^2=0$, equation \eqref{sigma} says that $c_0^2=1$, which is imposible. If $\sigma_{23}=0$, using that $X(\sigma_{22})=0$ in \eqref{derivatives}, it follows that $c_0=1$, which is imposible. Hence last equation says that $c_0=0$ again.

Hence we have proved that $c_0=0$ and so \eqref{H-constant} says that either $M$ is minimal, i.e. $H=0$, or $\sigma_{22}=\sigma_{33}$. We are going to study these cases separately.

{\it First case: $\sigma_{22}=\sigma_{33}$}. In this case we have that $3H=2\sigma_{22}$ and hence $\sigma_{22}$ and $\sigma_{33}$ are constant functions. Using this in \eqref{derivatives} we obtain that $\sigma_{23}^2=1/2+\sigma_{22}^2$ and so $\sigma_{23}$ is also constant and the hypersurface has constant scalar curvature $\rho=1$.
 
Now, taking into account that $c_0=0$, the second fundamental form, with respect to the orthonormal reference on $M$ given by $\{X,J_1N,J_2N\}$, is given by
\[
AX=0,\quad A(J_1N)=(\sigma_{22}+\sigma_{23})J_1N,\quad A(J_2N)=(\sigma_{22}-\sigma_{23})J_2N,
\]
with $(\sigma_{22}+\sigma_{23})(\sigma_{22}-\sigma_{23})=-1/2$.  As these principal curvatures are constant and their product is $-1/2$, these numbers can be written, without loss of generality, as 
\[
\sigma_{22}+\sigma_{23}=\frac{1}{\sqrt2}\sqrt{\frac{1+t}{1-t}},\quad \sigma_{22}-\sigma_{23}=-\frac{1}{\sqrt2}\sqrt{\frac{1-t}{1+t}}
\]
for certain $0\leq t<1$.

Now to find the focal set of $M$, we consider the parallel hypersurfaces to $M$.  As $C=0$, we have that $N=(N_1,N_2)$ with $|N_1|^2=|N_2|^2=1/2$. Hence, the parallel hypersurfaces to $M$ are given by
$\Phi_s:M\rightarrow\s^2\times\s^2$, $s\geq 0,$  where
\[
\Phi_s(p,q)=(\exp_p(sN_1),\exp_q(sN_2))=\cos(\frac{s}{\sqrt2})(p,q)+\sqrt2\sin(\frac{s}{\sqrt2})N.
\]
Then
\begin{gather*}
(\Phi_s)_*(X)=\cos(\frac{s}{\sqrt2})X-\frac{1}{\sqrt2}\sin(\frac{s}{\sqrt2})\hat{\Phi}\\
(\Phi_s)_*(J_1N)=\big(\cos(\frac{s}{\sqrt2})-\sqrt{\frac{1+t}{1-t}}\sin(\frac{s}{\sqrt2})\big)J_1N,\\
(\Phi_s)_*(J_2N)=\big(\cos(\frac{s}{\sqrt2})+\sqrt{\frac{1-t}{1+t}}\sin(\frac{s}{\sqrt2})\big)J_2N,
\end{gather*}
where $\hat{\Phi}=S\circ\Phi$, $S$ being the isometry of $\s^2\times\s^2$ given by $S(p,q)=(p,-q)$.

Hence, the focal surface of $M$ happens when $\cot(\frac{s}{\sqrt2})= \sqrt{\frac{1+t}{1-t}}$, i.e., when $\cos(\frac{s}{\sqrt2})=\sqrt{(1+t)/2},\,\sin(\frac{s}{\sqrt2})=\sqrt{(1-t)/2}$. But this means that $\cos(\sqrt{2}s)=t$, and so $s=\frac{1}{\sqrt2}\arcos t$.

It $\Sigma$ is the focal surface of $M$  and we denote by $\Psi$ the restriction of $\Phi_{s}$ (with $s=\frac{1}{\sqrt2}\arcos t$) to $\Sigma$, then the immersion $\Psi:\Sigma\rightarrow\s^2\times\s^2$ is given by
\[
\Psi=\frac{\sqrt{1+t}}{\sqrt2}\,\Phi+\sqrt{1-t}\,N.
\]
As $(\Phi_s)_*(J_1N)=0$, for $s=\frac{1}{\sqrt2}\arcos t$, it is clear that $\{X, \,J_2N\}$ is an orthonormal reference of the tangent bundle to $\Sigma$ and that

\begin{gather*}
(\Psi)_*(X)=\frac{\sqrt{1+t}}{\sqrt2}\,X-\frac{\sqrt{1-t}}{2}\,\hat{\Psi},\\
(\Psi)_*(J_2N)=\frac{\sqrt2}{\sqrt{1+t}}\,J_2N.
\end{gather*}
Hence,  $\{J_1N,\, \frac{\sqrt{1-t}}{2}\,\Phi-\frac{\sqrt{1+t}}{\sqrt2}\,N\}$
is an orthonormal reference on the normal bundle of $\Psi$. Now, it is easy to check that the corresponding Weingarten endomorphisms associated to these two unit normal vector fields vanish, and so $\Psi$ is a totally geodesic immersion. Moreover, as $J_1X=J_2N$, the immersion $\Psi$ is a complex surface with respect to the complex structure $J_1$ and a Lagrangian surface with respect to the other complex structure $J_2$. From [CU], we have that $\Psi$ is congruent to an open subset of the diagonal surface $\{(p,p)\in\s^2\times\s^2\,|\, p\in\s^2\}$ and our hypersurface $M$ is an open subset of the tube of radius $s=\frac{1}{\sqrt2}\arcos t$ over the diagonal surface. Taking into account section 3.2,  we get that $M$ is locally congruent to some $\{M_t,\,t\in(-1,1)\}$. Hence we have obtained (1b).

{\it Second case: $H=0$}. In this case, equation \eqref{derivatives} becomes in
\begin{equation}\label{H}
\begin{split}
X(\sigma_{22})=\frac{1}{2}+\sigma_{22}^2-\sigma_{23}^2,\quad X(\sigma_{23})=2\sigma_{22}\sigma_{23},\\
\langle \nabla\sigma_{22},E_2\rangle=-\langle \nabla\sigma_{23},E_3\rangle,\quad \langle \nabla\sigma_{22},E_3\rangle=\langle \nabla\sigma_{23},E_2\rangle.
\end{split}
\end{equation}
Now,  if $\Delta=\sum_{i=1}^3(E_iE_i-\nabla_{E_i}E_i)$ is the Laplacian of the induced metric on $M$, from \eqref{H} we have that
\[
\Delta\sigma_{22}=\Delta\sigma_{23}=0,
\]
that is $\sigma_{22}$ and $\sigma_{23}$ are harmonic functions on $M$.

If $M$ is compact, then $\sigma_{22}$ and $\sigma_{23}$ are constant functions. Using \eqref{H} again we have two posibilities: $\sigma_{22}=0$ or $\sigma_{23}=0$. In the first case, $\sigma_{23}^2=1/2$ and hence we are again in the situation of the first case. So our hypersurface is  congruent to $M_0$ and we obtain (1b). In the second case ($\sigma_{23}=0$), if  $\gamma:\r\rightarrow M$ is an integral curve of $X$, which is defined on all $\r$, we can integrate $X(\sigma_{22})=1/2+\sigma_{22}^2$ along $\gamma$ and we get $\sigma_{22}(t)=(1/\sqrt{2})\tan (t/\sqrt{2}+a), \, a\in \r$, who is not defined in all $\r$. This is a contradiction and the hypersurface cannot be compact.

Also, if the scalar curvature is constant, then $\sigma_{22}^2+\sigma_{23}^2$ will be constant, and so, derivating with respect to $X$ and using
\eqref{H} we will obtain that
\[
0=\sigma_{22}(1+2\sigma_{22}^2+2\sigma_{23}^2),
\]
which implies that $\sigma_{22}=0$ and hence $\sigma_{23}^2=1/2$. This says that $M$ is locally congruent to $M_0$. Hence,  in this case we get that either $M$ is locally congruent to $M_0$ or $H=0$, $C=0$ and $M$ is not compact with non-constant scalar curvature. This implies  $(1c)$.

\vspace{0.5cm}

{\it Case (2): the scalar curvature $\rho$ is constant}.

In this case, as the scalar curvature $\rho=2+2(\sigma_{22}\sigma_{33}-\sigma_{23}^2)$ is constant, from \eqref{derivatives} it follows that
\begin{equation}\label{scalar}
(c_0-1)(c_0-\rho+3)\sigma_{22}=(c_0+1)(c_0+\rho-3)\sigma_{33}.
\end{equation}
Using again \eqref{derivatives} and derivating  equation \eqref{scalar} with respect to $X$ we obtain that
\[
(\rho-1)(\rho-3)+c_0^2=0.
\]
Hence there are two possible values of the scalar curvature: 
$\rho=2\pm\sqrt{1-c_0^2}$.

{\it First case: $\rho=2-\sqrt{1-c_0^2}$}.  
Puting this information in \eqref{scalar} it follows that
\begin{equation}\label{sigma_22,sigma_33}
\sqrt{1-c_0}\,\sigma_{22}=\sqrt{1+c_0}\,\sigma_{33}.
\end{equation}
As in this case 
$\sigma_{22}\sigma_{33}-\sigma_{23}^2=-\frac{\sqrt{1-c_0^2}}{2}$, \eqref{sigma_22,sigma_33} becomes in
\[
\frac{\sqrt{1-c_0}}{\sqrt{1+c_0}}\sigma_{22}^2-\sigma_{23}^2=-\frac{\sqrt{1-c_0^2}}{2}.
\]
Using the last equation of  \eqref{derivatives} in the above equation and taking into account \eqref{sigma_22,sigma_33}  we get 
\begin{eqnarray*}
\sigma_{23}E_2(\sigma_{23})-\sigma_{22}E_3(\sigma_{23})=0,\\
\frac{\sqrt{1-c_0}}{\sqrt{1+c_0}}\sigma_{22}E_2(\sigma_{23})-\sigma_{23}E_3(\sigma_{23})=0.
\end{eqnarray*}
Now, the only solution to this compatible homogeneous system is $E_2(\sigma_{23})=E_3(\sigma_{23})=0$.

But
\[
[E_2,E_3]=-2\sqrt{1-c_0^2}\sigma_{23}E_1,
\]
and so $0=[E_2,E_3](\sigma_{23})=-2\sigma_{23}X(\sigma_{23})=-2\sigma_{22}\sigma_{23}^2$.
As $\sigma_{23}$ can not have zeroes, we get that $\sigma_{22}=0$ and so $\sigma_{33}=0$. Going again to \eqref{derivatives} we get that $c_0=0$, and so $\rho=1$. This situation has been studied en Case 1, and we obtain (2b).

{\it Second case: $\rho=2+\sqrt{1-c_0^2}$}.  
As in the above case, putting the value of the scalar curvature in \eqref{scalar} it follows that
\[ 
\sqrt{1-c_0}(\sqrt{1+c_0}-\sqrt{1-c_0})\sigma_{22}=-\sqrt{1+c_0}(\sqrt{1+c_0}-\sqrt{1-c_0})\sigma_{33}.
\]
If $c_0\not=0$, from the above equation we get that $\sqrt{1-c_0}\,\sigma_{22}=-\sqrt{1+c_0}\,\sigma_{33}$ and using that $\sigma_{22}\sigma_{33}-\sigma_{23}^2=\frac{\sqrt{1-c_0^2}}{2}$ we have that
\[
-\big (\frac{\sqrt{1-c_0}}{\sqrt{1+c_0}}\sigma_{22}^2+\sigma_{23}^2\big )=\frac{\sqrt{1-c_0^2}}{2}
\]
which is impossible. Hence, in this second case, we get that $c_0=0$, $\rho=3$ and $\sigma_{22}\sigma_{33}-\sigma_{23}^2=\frac{1}{2}$. 

If $\{v,w\}$ is an orthonormal basis of a plane $\Pi\subset T_pM$, then the Gauss equation says that the curvature $K$ of $\Pi$ is given by
\[
K=\bar{R}(v,w,w,v)+ \sigma(v,v)\sigma(w,w)-\sigma(v,w)^2.
\]
Using the above information it is easy to check that 
\begin{gather*}
\bar{R}(v,w,w,v)=1/2-1/2(\langle v,E_2\rangle\langle w,E_3\rangle-\langle v,E_3\rangle\langle w,E_2\rangle)^2,\\
\sigma(v,v)\sigma(w,w)-\sigma(v,w)^2=\\=(\sigma_{22}\sigma_{33}-\sigma_{23}^2)(\langle v,E_2\rangle\langle w,E_3\rangle-\langle v,E_3\rangle\langle w,E_2\rangle)^2,
\end{gather*}
and so $K=1/2$. This means that $M$ has constant curvature $1/2$.

Now, we are going to see that $M$ is not complete. In fact, if $M$ is complete, as $M$ has constant positive curvature,  Myers' theorem  says that $M$ is  compact. On the other hand,  from \eqref{derivatives} we have that 
\[
X(\sigma_{22}-\sigma_{33})=2+(\sigma_{22}-\sigma_{33})^2.
\]
But Lemma~\ref{le:C,X} says that $\hbox{div}\,(X)=-(\sigma_{22}-\sigma_{33})$. So
\begin{gather*}
\hbox{div}((\hbox{div} X)X)=X(\hbox{div} X)+(\hbox{div} X)^2\\
=-2-(\sigma_{22}-\sigma_{33})^2+(\sigma_{22}-\sigma_{33})^2=-2,
\end{gather*}
and the divergence Theorem gives a contradiction. Hence $M$ is not complete.

Also, we are going to see that the mean curvature $H$ is not constant. In fact, if $H$ is constant, from Lemma~\ref{le:C,X} we get that 
\[
0=tr\, (P^{T}A^2)=3H(\sigma_{22}-\sigma_{33}).
\] 
As $H$ cannot be zero, because $\sigma_{22}\sigma_{33}-\sigma_{23}^2=1/2$, we obtain that $\sigma_{22}-\sigma_{33}=0$, which contradicts, using \eqref{derivatives},  the equation of its derivative with respect to $X$. Hence $M$ has not constant mean curvature, and we get (2c).
\end{proof}
As a consequence of  Corollay~\ref{co:Cconstant} we classify locally  the homogeneous hypersurfaces of $\s^2\times\s^2$.

\begin{corollary}\label{co:homogeneous}
Let $\Phi:M\rightarrow \s^2\times\s^2$ be an orientable  hypersurface . If $M$ is locally homogeneous, then $\Phi(M)$ is congruent to either an open subset of $\s^1(r)\times\s^2,\; r\in(0,1]$, or an open subset of $M_t,\; t\in(-1,1)$.
\end{corollary}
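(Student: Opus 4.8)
The plan is to deduce this from Corollary~\ref{co:Cconstant}(3): I will check that local homogeneity forces the mean curvature $H$, the scalar curvature $\rho$ and the function $C$ to be constant, and then that corollary applies word for word. Since the assertion is local, I may assume $M$ connected.

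The first step is to describe the behaviour of these quantities under an ambient isometry. From \eqref{isom}, an isometry of the form $\mathrm{diag}(A,B)$ commutes with the product structure $P$, and one of anti-diagonal form anticommutes with it, so $dF\circ P=\pm P\circ dF$ for every isometry $F$ of $\s^2\times\s^2$. If $F$ maps a piece of $\Phi(M)$ onto another piece of $\Phi(M)$, then $F$ sends the unit normal to $\pm N$ and conjugates the shape operator $A$ into $\pm A$; hence, along the resulting identification, $|\sigma|^2=\mathrm{tr}(A^2)$ and $H^2=\frac{1}{9}(\mathrm{tr}\,A)^2$ are preserved, so $\rho=2+9H^2-|\sigma|^2$ is preserved, and $C^2=\langle PN,N\rangle^2$ is preserved as well since $\langle P(dF\,N),dF\,N\rangle=\pm C$. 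By local homogeneity, $C^2$, $H^2$ and $\rho$ are therefore locally constant, hence constant on the connected manifold $M$.

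Now I upgrade these to statements about $C$ and $H$ themselves. A continuous function on a connected manifold whose square is a constant is itself constant: at a point where it vanishes its square vanishes identically, while near a point where it does not vanish it keeps a fixed sign and equals the fixed square root of the (constant) square. Applying this to $H$ and to $C$, both are constant.

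At this point $\Phi:M\to\s^2\times\s^2$ is an orientable hypersurface with $C$, $H$ and $\rho$ all constant, so Corollary~\ref{co:Cconstant}(3) applies and yields that $\Phi(M)$ is congruent to an open subset of $\s^1(r)\times\s^2$ for some $r\in(0,1]$ or to an open subset of $M_t$ for some $t\in(-1,1)$, as desired. The only point that needs a moment's care is the passage from ``$C^2$ constant'' (which is all that the ambient isometries manifestly provide) to ``$C$ constant'', handled by the connectedness remark above; beyond that the argument is a direct appeal to the already established Corollary~\ref{co:Cconstant}, so there is no serious obstacle.
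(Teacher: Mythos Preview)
Your proof is correct and follows essentially the same route as the paper: both use that an ambient isometry satisfies $dF\circ P=\pm P\circ dF$ and sends $N$ to $\pm N$, deduce that $C$ (via $C^2$), the mean curvature and the scalar curvature are constant by connectedness, and then invoke Corollary~\ref{co:Cconstant}(3). Your treatment of the sign ambiguities (tracking $H^2$ and $C^2$ first, then upgrading via connectedness) is in fact a bit more careful than the paper's somewhat terse wording.
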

\begin{proof}
\end{proof}Let $N$ be a unit normal vector field. We fix a point $p_0\in M$. Then, as $M$ is locally homogeneous, for any $p\in M$ there exist open sets $p_0\in U_0$, $p\in U$ and an isometry $F$ of $\s^2\times\s^2$ such that $(F\circ\Phi)(U_0)=\Phi(U)$ and $F(\Phi(p_0))=\Phi(p)$. Then $N_p=\pm dF_{p_0}(N_{p_0})$ and $dF_{p_0}\circ P=\pm P\circ dF_{p_0}$. Hence $C(p)=\pm C(p_0)$ and as $M$ is connected, $C$ is constant. Also, it is clear that $F$ keeps the second fundamental form, and so the mean curvature and the scalar curvature of $M$ are also constant.  Hence Corollay~\ref{co:homogeneous}  follows from Corollay~\ref{co:Cconstant}.

\section{Isoparametric hypersurfaces}
In this section we classify the isoparametric hypersurfaces of $\s^2\times\s^2$. If $M$ is an isoparametric hypersurface of $\s^2\times\s^2$, then there exists an isoparametric function $F:\s^2\times\s^2\rightarrow\r$ such that $M=F^{-1}(t_0)$ for some regular value $t_0$. Then it is well-known that $M_t:=F^{-1}(t)$ are also hypersurfaces for $t\in(t_0-\delta,t_0+\delta)$, parallel to $M$ and with constant mean curvature. We start classifying  hypersurfaces of $\s^2\times\s^2$ satisfying this property.
\begin{theorem}\label{te:isoparametric}
Let $\Phi:M\rightarrow \s^2\times\s^2$ be an orientable hypersurface. If the parallel hypersurfaces $\Phi_t:M\rightarrow \s^2\times\s^2$, $t\in(-\epsilon,\epsilon),\,\Phi_0=\Phi$, have constant mean curvature, then $\Phi(M)$ is congruent either  to an open subset of $\s^1(r)\times\s^2,\; r\in(0,1]$, or to an open subset of $M_t,\; t\in(-1,1)$.
\end{theorem}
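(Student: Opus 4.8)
The plan is to deduce from the hypothesis that the mean curvature, the scalar curvature and the function $C$ of $\Phi$ are all constant, and then to conclude by Corollary~\ref{co:Cconstant}\,(3). Fix a unit normal field $N$ to $\Phi$; for $p\in M$ let $\gamma_p$ be the geodesic of $\s^2\times\s^2$ with $\gamma_p(0)=\Phi(p)$ and $\gamma_p'(0)=N_p$, so that $\Phi_t(p)=\gamma_p(t)$ and $N_t(p):=\gamma_p'(t)$ is a unit normal to $\Phi_t$; denote by $A_t$ the associated shape operator. Along $\gamma_p$ the operators $A_t$ satisfy the Riccati equation $A_t'=A_t^2+\bar R_{N_t}$, where $\bar R_{N_t}(v)=\bar R(v,N_t)N_t$ is the Jacobi operator. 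Two facts about the ambient space will be used. First, $\s^2\times\s^2$ is Einstein with $\overline{\Ric}=\langle,\rangle$, so $\traza\bar R_{N_t}=\overline{\Ric}(N_t,N_t)=1$ is constant. Second, since $\bar\nabla\bar R=0$ (the curvature is built algebraically from $\langle,\rangle$ and the parallel tensor $P$), $\bar R_{N_t}$ is parallel along $\gamma_p$; moreover $\frac{d}{dt}\langle PN_t,N_t\rangle=0$, so the value of $C$ is constant along $\gamma_p$ and $X_t:=PN_t-C\,N_t$ is a parallel vector field along $\gamma_p$ tangent to $\Phi_t$. Using the curvature formula of Section~2 together with $P^TX=-CX$, $\traza P^T=-C$ and $|X|^2=1-C^2$, one checks that $\bar R_{N_t}X_t=0$ and that, in a parallel orthonormal frame along $\gamma_p$, $\bar R_{N_t}$ is a constant matrix with eigenvalues $0,\tfrac{1+C}2,\tfrac{1-C}2$.

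Since each $\Phi_t$ has constant mean curvature, $3H_t=\traza A_t$ depends only on $t$. Differentiating in $t$ and using $A_t'=A_t^2+\bar R_{N_t}$ and $\traza\bar R_{N_t}=1$ gives $3\dot H_t=\traza A_t^2+1=|\sigma_t|^2+1$, so $|\sigma_t|^2$ also depends only on $t$, and hence so does $\rho_t=2+9H_t^2-|\sigma_t|^2$ by the Gauss equation. In particular $H=H_0$, $|\sigma|^2$ and $\rho$ are constant on $M$.

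It remains to prove that $C$ is constant, i.e. (Lemma~\ref{le:C,X}\,(1), $\nabla C=-2AX$, and connectedness of $M$) that $AX\equiv0$ on $M$. Where $C^2=1$ this is automatic since then $X=0$, so assume $C^2<1$. Keep differentiating $3H_t$ in $t$: from $\frac d{dt}|\sigma_t|^2=2\traza A_t^3+2\traza(A_t\bar R_{N_t})$ being independent of $p$ and from Newton's identities for a symmetric $3\times3$ matrix (which express $\traza A_t^k$, $k\ge3$, through $\traza A_t$, $\traza A_t^2$ and $\det A_t$), one gets that $3\det A_t+\traza(A_t\bar R_{N_t})$ is independent of $p$; a further differentiation likewise controls $\traza(A_t^2\bar R_{N_t})$ and $\traza\bar R_{N_t}^2=\tfrac{1+C^2}2$. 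Now rewrite these expressions by Lemma~\ref{le:C,X}: since $H_t$ is constant one has $\langle A_tX_t,X_t\rangle=-\tfrac12\,X_t(C)$, $\traza(P^T_tA_t)=3CH_t-\hbox{div}\,X_t$ and $\traza(P^T_tA_t^2)=\tfrac12\Delta_tC+C|\sigma_t|^2$; together with the fact that $X_t$ and $\bar R_{N_t}$ are parallel along $\gamma_p$ (so that $Z_t:=A_tX_t$ obeys $Z_t'=A_tZ_t$ and $\frac d{dt}\langle A_tX_t,X_t\rangle=|A_tX_t|^2$), evaluating the resulting relations at $t=0$ forces $AX=0$ on $M$. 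Hence $C$ is constant.

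Now $H$, $\rho$ and $C$ are all constant on $M$, so Corollary~\ref{co:Cconstant}\,(3) applies and $\Phi(M)$ is congruent to an open subset of some $\s^1(r)\times\s^2$, $r\in(0,1]$, or of some $M_t$, $t\in(-1,1)$. (Equivalently, once $C$ is known to be constant one invokes Theorem~\ref{te:Cconstant}\,(1): cases (1a) and (1b) are exactly the stated examples, and case (1c) cannot occur since the scalar curvature of $\Phi$ is constant by the second paragraph.) The main obstacle is the third paragraph: arranging the successive $t$-derivatives of the mean-curvature condition so that, after using the explicit form of $\bar R_{N_t}$ and Lemma~\ref{le:C,X}, all terms not expressible through $t$ alone are forced to cancel and produce $AX\equiv0$.
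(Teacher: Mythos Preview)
Your first two paragraphs are correct and in fact give a cleaner route than the paper to the constancy of $H$ and $\rho$: instead of the paper's explicit parametrization $\Phi_t=(\exp_\phi tN_1,\exp_\psi tN_2)$ and the determinant $\det Q$, you use the Riccati equation $A_t'=A_t^2+\bar R_{N_t}$, the fact that $\s^2\times\s^2$ is locally symmetric (so $\bar R_{N_t}$ is parallel along $\gamma_p$), and $\traza\bar R_{N_t}=1$. Your eigenvalue computation $\bar R_{N_t}\sim\operatorname{diag}\bigl(0,\tfrac{1+C}{2},\tfrac{1-C}{2}\bigr)$ in the frame $\{X/|X|,E_2,E_3\}$ is also right and matches the paper's frame.

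The gap is the third paragraph, and you acknowledge it yourself. The sentence ``evaluating the resulting relations at $t=0$ forces $AX=0$ on $M$'' is not a proof; as stated it is a hope. Two concrete problems:
\begin{itemize}
\item The successive $t$--derivatives of $3H_t$ produce, at each level, a single scalar relation on $M$ that mixes several unknown functions: $K=\det A$, $r_1=\traza(A\bar R_N)$, $r_2=\traza(A^2\bar R_N)$, and $\traza\bar R_N^2=\tfrac{1+C^2}{2}$. For example level~2 gives only that $3K+r_1$ is constant, and level~3 that $36HK+4r_2+\tfrac{1+C^2}{2}$ is constant; neither isolates $C^2$. In the paper's equivalent computation this is exactly why one must push the Taylor expansion to order~$8$: only after solving the order--$4$ and order--$6$ relations for $H_{12},H_{13}$ as explicit rational functions of $C$ and substituting into the order--$8$ relation does one obtain a nontrivial degree--$7$ polynomial in $C$ with constant coefficients, whence $C$ is locally constant on $\{0<C^2<1\}$ and then globally constant. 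Two or three derivatives are not enough, and nothing in your sketch indicates how to eliminate the other unknowns.
\item Your attempt to rewrite $r_1,r_2,\langle AX,X\rangle$ via Lemma~\ref{le:C,X} as $\hbox{div}\,X$, $\Delta C$, $X(C)$ does not help: the constraints coming from $t$--differentiation are \emph{pointwise algebraic} in $C$ and the $\sigma_{ij}$ (they are the Taylor coefficients of the paper's $\det Q$), whereas the quantities you introduce are differential on $M$. Passing to them does not create new constraints; it just obscures the algebraic elimination that is actually needed.
\end{itemize}
So the Riccati formulation is a legitimate alternative packaging of the paper's computation, but the hard step---eliminating the auxiliary unknowns to force $C$ constant---still requires carrying the expansion to high order and doing the algebra, exactly as in the paper. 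Once $C$ is shown constant, your final appeal to Corollary~\ref{co:Cconstant}\,(3) (or Theorem~\ref{te:Cconstant}\,(1) with (1c) excluded by constancy of $\rho$) is fine.
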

\begin{proof}
 We are going to consider the open subset of $M$ defined by $O=\{p\in M\,|\,C^2(p)<1\}$. If $O$ is empty, then $M$ has constant mean curvature and $C^2=1$, and hence Theorem~\ref{te:Cconstant} says that $\Phi(M)$ is congruent to an open subset of $\s^1(r)\times\s^2,\,r\in (0,1]$.

Now we suppose that $O$ is not empty. We write $\Phi=(\phi,\psi):O\rightarrow\s^2\times\s^2$, with $\phi,\psi:O\rightarrow \s^2$.
If $N=(N_1,N_2)$  is a unit normal vector field to $\Phi$, then the parallel hypersurfaces $\Phi_t:O\rightarrow \s^2\times\s^2$ are given by
\[
\Phi_t=(\exp_{\phi}tN_1,\exp_{\psi} tN_2),\quad t\in(-\epsilon,\epsilon),\quad \Phi_0=\Phi,
\]
where $\exp$ denotes the exponential map in $\s^2$. As $|N_1|^2=(1+C)/2$ and $|N_2|^2=(1-C)/2$, then $\Phi_t=(\phi_t,\psi_t)$ is defined by
\begin{gather*}
\phi_t=\cos(C^+t)\phi+(1/C^+)\sin(C^+t) N_1,\\
\psi_t=\cos(C^-t)\psi+(1/C^-)\sin(C^-t) N_2,
\end{gather*}
where $C^+=\sqrt{1+C}/\sqrt 2$ and $C^-=\sqrt{1-C}/\sqrt 2$.

Now, it is straightforward to check that $N^t=(N^t_1,N^t_2)$ defined by
\begin{gather*}
N^t_1=\cos(C^+t)N_1-C^+\sin(C^+t) \phi,\\
N^t_2=\cos(C^-t)N_2-C^-\sin(C^-t) \psi,
\end{gather*}
is a unit normal vector field to the hypersurface $\Phi_t$. Under these conditions it is easy to see that 
\[
C_t=C,\quad J_1N^t=J_1N,\quad J_2N^t=J_2N, \quad \forall t\in(-\epsilon,\epsilon).
\]
Hence, in $(\Phi_t)_*(TM_{|O})$ we can consider the orthonormal reference $\{E_i^t,\,i=1,2,3\}$ where
\[
E_1^t=\frac{X_t}{\sqrt{1-C^2}},\quad E_2^t=\frac{J_1N^t+J_2N^t}{\sqrt{2(1+C)}},\quad E_3^t=\frac{J_1N^t-J_2N^t}{\sqrt{2(1-C)}}.
\]  
Taking into account the above relations, if we denote $E_i^0=E_i$, it is clear that $E_2^t=E_2,, E_3^t=E_3,\,\forall t\in (-\epsilon,\epsilon)$.

Now, let $\{e_1,e_2,e_3\}$ be a local orthonormal reference on $O$ such that $\Phi_*(e_i)=E_i,\,i=1,2,3$. We denote $\sigma_{ij}=\langle \Phi_*(Ae_i),\Phi_*(e_j)\rangle$ the second fundamental form of $\Phi$ associated to the normal field $N$. Then, from a simply but long computation, we obtain that
\begin{gather*}
(\Phi_t)_*(e_i)=(\delta_{1i}-t\sigma_{1i}) E_1^t\\
+\big(\delta_{2i}\cos (C^+t)-\sigma_{2i}\frac{\sin(C^+t)}{C^+}\big)E^t_2\\
+\big(\delta_{3i}\cos (C^-t)-\sigma_{3i}\frac{\sin(C^-t)}{C^-}\big) E_3^t
\end{gather*}
and 
\begin{gather*}
(\Phi_t)_*(A^te_i)=\sigma_{1i} E_1^t\\
+\big(\sigma_{2i}\cos (C^+t)+\delta_{2i}C^+\sin(C^+t)\big) E_2^t\\
+\big(\sigma_{3i}\cos (C^-t)+\delta_{3i}C^-\sin(C^-t)\big) E_3^t,
\end{gather*}
where $A^t$ is the shape operator of $\Phi_t$ associated to the normal field $N^t$. 

If we denote $(\Phi_t)_*(e_i)=\sum_{j=1}^3Q_{ij}E_j^t$, it is clear from the above expressions that $(\Phi_t)_*(A^te_i)=-\sum_{j=1}^3Q'_{ij}E^t_j$, where $'$ stands for derivative with respect to $t$. Hence the induced metric $g_t$ on $O$ by the immersion $\Phi_t$ and the second fundamental form $\sigma^t$ of $\Phi_t$ are given by
\[
g_t=QQ^T,\quad \sigma^t=-Q(Q^T)',
\]
where $Q$ is the matrix $Q=(Q_{ij})$ and  $(\cdot)^T$ stands for the transpose. The mean curvatures of the immersions $\Phi_t$ are given by
\[
3H(t)=\traza\, (g_t^{-1}\sigma^t)=-\traza\,\big ( (Q^T)^{-1}Q^{-1}Q(Q^{T})'\big )=-\frac{(\det Q)'}{\det Q},
\] 
where $\traza$ stands for the trace and $\det$ stands for the determinant.
Hence $(\det Q)'=-3H(t)\det Q$ and as $Q(0)=Id$, an inductive argument says that
\[
\big (\frac{d^k\det Q}{dt^k}\big )(0),\quad k\geq 0,
\]
are constants functions on $O$.

From the definition, the determinant of $Q$ is given by
\begin{gather*}
\det Q=(1-t\sigma_{11})\cos(C^+t)\cos(C^-t)+(H_{23}-tK)\frac{\sin(C^+t)\sin(C^-t)}{C^+C^-}\\
+(-\sigma_{22}+tH_{12})\frac{\sin(C^+t)\cos(C^-t)}{C^+}+(-\sigma_{33}+tH_{13})\frac{\cos(C^+t)\sin(C^-t)}{C^-},
\end{gather*}
where $H_{ij}=\sigma_{ii}\sigma_{jj}-\sigma_{ij}^2$ and $K=\det A$ is the Gauss-Kronecker curvature of $M$.

Now, computing the Taylor serie of the function $\det Q$ around $t=0$ and from a very long computation we get that
\begin{gather*}
\det Q=1-3H\,t+\frac{3-\rho}{2}\,t^2+ \frac{9H-6K-(1+C)\sigma_{22}-(1-C)\sigma_{33}}{3!}\,t^3\\
+\frac{2(3-\rho)-C^2-2((1-C)H_{12}+(1+C)H_{13})}{4!}\,t^4\\
+ \frac{-5(2-C^2)3H+20K+4(1+C-C^2)\sigma_{22}+4(1-C-C^2)\sigma_{33}}{5!}\,t^5\\
+\frac{4(\rho-3)+(5-\rho)C^2+4((1-C)(4+C)H_{12}+(1+C)(4-C)H_{13})}{6!}\,t^6+\dots
\end{gather*}
As all the coefficients of the above serie are constant functions, we get that not only the mean curvature but also the scalar curvature $\rho$ of $M$ is constant.

Now we work on the open subset $V=\{p\in O\,|\,C(p)\not=0\}$. Taking into account that the coefficients corresponding to $t^4$ and $t^6$ are constant and that $\rho$ is also constant, we obtain on $V$ that
\[
H_{12}=\frac{(1+C)R_2(C)}{8C(1-C^2)},\quad H_{13}=\frac{(1-C)R_3(C)}{8C(1-C^2)},
\]
where $R_2$ and $R_3$ are non-trivial polynomials of degree $3$ in $C$ with constant coefficients. Computing the term of the Taylor serie corresponding to $t^8$, we have that 
\begin{gather*}
-8(\rho-3)+4(\rho-4)C^2+(-37+11C+17C^2-C^3)H_{12}\\+(-37-11C+17C^2+C^3)H_{13}=\lambda, 
\end{gather*}
for certain constant $\lambda$. Using the above expressions of $H_{12}$ and $H_{13}$ in this equation, we finally prove that $C$ satisfies a non trivial polynomial of degree 7 with constant coefficients. This means that $C$ is constant on each connected component of the open set $V$.

Hence the function $C$ on the connected hypersurface $M$ takes only a discrete number of values. This means that the function $C$ is constant and the result follows from  Corollay~\ref{co:Cconstant}.
\end{proof}
\begin{corollary}\label{co:isoparametric}
Let $\Phi:M\rightarrow\s^2\times\s^2$ be an isoparametric hypersurface. Then $M$ is congruent either to $\s^1(r)\times\s^2,\; r\in(0,1]$, or to $M_t,\; t\in(-1,1)$.
\end{corollary}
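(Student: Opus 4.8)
The plan is to obtain this corollary directly from Theorem~\ref{te:isoparametric}, once one checks that every isoparametric hypersurface satisfies that theorem's hypotheses, and then to promote the resulting local statement to a global one using compactness.

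Let $F:\s^2\times\s^2\to\r$ be an isoparametric function and $M=F^{-1}(t_0)$ with $t_0$ a regular value, $\Phi:M\to\s^2\times\s^2$ the inclusion. First I would recall the standard facts about isoparametric functions, already noted at the start of this section. Since $|\nabla F|^2=f(F)$, the identity $\nabla_{\nabla F}\nabla F=\frac{1}{2}\nabla|\nabla F|^2=\frac{1}{2} f'(F)\,\nabla F$ shows that the integral curves of $\nabla F$ are pregeodesics, so the nearby level sets $F^{-1}(t)$, $t\in(t_0-\delta,t_0+\delta)$, are exactly the parallel hypersurfaces $\Phi_t:M\to\s^2\times\s^2$ of $\Phi=\Phi_0$; and since $\Delta F=g(F)$, each $\Phi_t$ has constant mean curvature. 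Moreover $\nabla F$ restricts to a nowhere-zero normal field along $M$, so $M$ is two-sided and hence orientable. Thus the hypotheses of Theorem~\ref{te:isoparametric} are met, and $\Phi(M)$ is congruent to an open subset of $\s^1(r)\times\s^2$ for some $r\in(0,1]$, or to an open subset of $M_t$ for some $t\in(-1,1)$.

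It remains only to drop the words ``open subset of''. Because $\s^2\times\s^2$ is compact and $t_0$ is a regular value, $M=F^{-1}(t_0)$ is a compact embedded hypersurface, so $\Phi(M)$ is compact. After applying the isometry of $\s^2\times\s^2$ provided by Theorem~\ref{te:isoparametric}, the image becomes a subset of the connected hypersurface $\s^1(r)\times\s^2$ (respectively $M_t$) that is simultaneously open and closed, hence the whole hypersurface. (If $M$ is not assumed connected, the same argument applies componentwise, each component being compact.) This gives the corollary.

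All the substance lies in Theorem~\ref{te:isoparametric}; past that point there is no genuine obstacle, only the routine identification of the level family $F^{-1}(t)$ with the parallel hypersurfaces of $M$ and the observation that a regular level set of a function on a compact manifold is itself compact.
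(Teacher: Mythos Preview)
Your argument is correct and is exactly the route the paper intends: the introductory paragraph of the section already records that the level sets of an isoparametric function form a parallel family with constant mean curvature, so the corollary is meant to follow immediately from Theorem~\ref{te:isoparametric}, and the paper accordingly states it without proof. Your additional care in justifying orientability via the normal field $\nabla F$ and in passing from ``open subset of'' to the full model hypersurface by the open--closed argument using compactness of $M=F^{-1}(t_0)$ just makes explicit what the paper leaves to the reader.
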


\section{Hypersurfaces with constant principal curvatures}

In this section we are going to study orientable hypersurfaces of $\s^2\times\s^2$ with constant principal curvatures. 

\subsection{Hypersurfaces with one or two constant principal curvatures}
In the following result we classify locally the orientable hypersurfaces of $\s^2\times\s^2$ with at most two constant principal curvatures.
\begin{theorem}\label{te:2curvaturas}
Let $\Phi:M\rightarrow\s^2\times\s^2$ be an orientable hypersurface of $\s^2\times\s^2$. If $M$ has at most two constant principal curvatures, then $\Phi(M)$ is either an open subset of $\s^1\times\s^2$ ($M$ is totally geodesic), or $\Phi(M)$ is an open subset of $\s^1(r)\times\s^2,\;r\in(0,1) $ (if $M$ has two constant principal curvatures).
\end{theorem}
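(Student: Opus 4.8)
The plan is to reduce everything to the situation where $C$ is constant, and then invoke Corollary~\ref{co:Cconstant}. The first case is $M$ totally geodesic: then $A=0$, so $\sigma=0$, and from Lemma~\ref{le:C,X}(1) we get $\nabla C=-2AX=0$, hence $C$ is constant; moreover $\nabla_VX=CAV-P^TAV=0$, so $X$ is parallel. If $C^2<1$ somewhere then $|X|^2=1-C^2>0$ and $X$ is a nonzero parallel vector field, which forces $M$ to split locally and, combined with the Gauss equation (which would make $M$ flat in directions involving $X$), contradicts the known curvature of a totally geodesic hypersurface; the cleaner route is to observe that from Section 3.1 a hypersurface with $C\equiv 1$ and $A_{|D}=0$ is locally a product $\gamma\times\s^2$ with $\gamma\subset\s^2$, and $A=0$ forces $\gamma$ to be a geodesic, giving $\s^1\times\s^2$. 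One must still rule out $C^2<1$ for a totally geodesic hypersurface: if $C=c_0\in(-1,1)$ then by the reference $\{E_1,E_2,E_3\}$ of Corollary~\ref{co:Cconstant} the shape operator vanishes, but then equations \eqref{derivatives} give $X(\sigma_{22})=\tfrac{1-c_0^2}{2}\neq 0$, a contradiction since $\sigma_{22}\equiv 0$. So totally geodesic $\Rightarrow$ $C^2=1$ $\Rightarrow$ open subset of $\s^1\times\s^2$.

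Next suppose $M$ has exactly two distinct constant principal curvatures $\mu\neq\nu$, with multiplicities $1$ and $2$ (or $2$ and $1$). Since the principal curvatures are constant, $|\sigma|^2$ and $H$ are constant, so $\Delta C$ is controlled: from Lemma~\ref{le:C,X}(3), $\Delta C=-2C|\sigma|^2+2\,\traza(P^TA^2)$. The key point I would exploit is that $P^T$ and $A$ interact through $X$, and $\nabla C=-2AX$. I would first show $X$ is a principal direction: differentiate $|\nabla C|^2=4|AX|^2$ and use $|X|^2=1-C^2$, or more directly argue that at a point where $C^2<1$ the eigenvalue distributions of $A$ are smooth (two distinct constant eigenvalues), and the Hessian formula Lemma~\ref{le:C,X}(2), $(\nabla^2C)(V,W)=-2(\nabla\sigma)(V,X,W)-2C\langle AV,AW\rangle+2\langle PAV,AW\rangle$, together with Codazzi, pins down $AX$. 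In fact Codazzi forces $X$ to lie in one of the two eigenspaces, and a dimension count plus the structure equations \eqref{P} then forces the corresponding eigenvalue to be $0$: this is exactly the mechanism by which the one-dimensional eigendistribution must be spanned by $X$ with eigenvalue $0$. Once $AX=0$ we get $\nabla C\equiv 0$, so $C$ is constant.

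Having reduced to $C$ constant, I apply Corollary~\ref{co:Cconstant}(3) (or Theorem~\ref{te:Cconstant} directly, since constant principal curvatures give both constant mean curvature and constant scalar curvature): the hypersurface is congruent to an open subset of $\s^1(r)\times\s^2$ or of $M_t$. But $M_t$ has \emph{three} distinct constant principal curvatures $0,\lambda_2,\lambda_3$ with $\lambda_2\lambda_3=-1/2$, hence $\lambda_2\neq\lambda_3$ and neither is $0$; so $M_t$ is excluded by hypothesis. Therefore $\Phi(M)$ is an open subset of $\s^1(r)\times\s^2$; when $r=1$ it is totally geodesic (one principal curvature, $0$), and when $r\in(0,1)$ it has the two constant principal curvatures $0$ and $\sqrt{1-r^2}/r$. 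The case of a single constant principal curvature $\mu$: if $\mu\neq 0$ then $M$ is (umbilic-type with one eigenvalue) totally umbilic with $A=\mu\,\mathrm{Id}$, and then $\nabla C=-2\mu X$, $\Delta C=-2C(3\mu^2)+2\mu^2\,\traza P^T=-6C\mu^2$ (using $\traza P^T=-C$ from the matrix display), while also $\mathrm{div}\,X=3C\mu-\mu\,\traza P^T=4C\mu$; feeding these into the structure and a maximum-principle-free argument (or simply: $\nabla^2 C=-2\mu\nabla X=-2\mu(C\mu\,\mathrm{Id}-\mu P^T)$ has trace giving a Jacobi-type ODE with no nontrivial solution compatible with $-1\le C\le1$ unless $\mu=0$) shows $\mu=0$, i.e. $M$ is totally geodesic, handled above. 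The main obstacle is the second paragraph: rigorously forcing $X$ into an eigenspace and the eigenvalue to vanish, purely from Codazzi and the algebraic relations \eqref{P}, without yet knowing $C$ is constant — this requires a careful local frame computation splitting the cases (multiplicity of $\mu$ being $1$ versus $2$) and tracking where $X$ sits relative to the eigenspaces.
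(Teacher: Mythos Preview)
Your overall strategy---reduce to $C$ constant and invoke Corollary~\ref{co:Cconstant}---is exactly right, and your treatment of the totally geodesic case via \eqref{derivatives} is clean and correct. But the heart of the theorem is the two-principal-curvature case, and there you have a genuine gap, which you yourself flag: you never actually prove that $X$ is principal (let alone that the corresponding eigenvalue is $0$). Your claim that ``Codazzi forces $X$ to lie in one of the two eigenspaces, and a dimension count \dots\ forces the corresponding eigenvalue to be $0$'' is an assertion, not an argument, and in fact the conclusion you aim for is slightly off: in the paper's argument one finds $\langle X,E_1\rangle=0$, so $X$ lies in the \emph{two}-dimensional eigenspace, not the one-dimensional one, and the eigenvalue attached to $X$ is not shown to vanish directly. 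The route to $C$ constant is more indirect.

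Here is what the paper does and what you are missing. Let $E_1$ be a unit field spanning the multiplicity-one eigendistribution, $AE_1=\lambda_1E_1$. Writing $\sigma$ and $\nabla\sigma$ in terms of $E_1$ and using Codazzi together with constancy of $H$, one first shows $\nabla_{E_1}E_1=0$ and $\mathrm{div}\,E_1=0$, and then derives an explicit formula
\[
\nabla_VE_1=\frac{\langle X,V\rangle P^TE_1-\langle X,E_1\rangle P^TV}{2(\lambda_1-\lambda_2)}.
\]
From this one computes $\Ric(E_1)$ intrinsically and compares with the Gauss-equation value; the result is the scalar identity
\[
1+4\lambda_1\lambda_2=C\langle PE_1,E_1\rangle-\Bigl(1+\tfrac{1}{(\lambda_1-\lambda_2)^2}\Bigr)\langle X,E_1\rangle^2.
\]
Differentiating this along $E_1$ twice (using $E_1(C)$, $E_1\langle PE_1,E_1\rangle$, $E_1\langle X,E_1\rangle$, all of which follow from Lemma~\ref{le:C,X} and the formula for $\nabla_VE_1$) forces $\langle X,E_1\rangle=0$ and $\langle PE_1,E_1\rangle=C$, whence $C^2=1+4\lambda_1\lambda_2$ is constant. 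The missing idea in your proposal is precisely this Ricci-curvature comparison for the multiplicity-one direction; there is no shortcut that pins down $AX$ directly from Codazzi and \eqref{P} alone.

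A smaller gap: your umbilical case with $\mu\neq 0$ is also not complete---the ``Jacobi-type ODE'' sketch does not yield a contradiction (indeed $\Delta C=-8\mu^2C$ with $|\nabla C|^2=4\mu^2(1-C^2)$ is perfectly consistent locally). The quickest fix is to note that with $A=\mu\,\mathrm{Id}$ and $\mu$ constant one has $\nabla\sigma\equiv 0$, so Codazzi reads $\langle X,V\rangle\langle PW,Z\rangle=\langle X,W\rangle\langle PV,Z\rangle$ for all tangent $V,W,Z$; taking $W\perp X$ gives $P^TW=0$ and hence $PW=0$, impossible unless $X=0$. Thus $C^2\equiv 1$, and then Section~3.1 forces $\mu=0$. (The paper simply cites an external reference for this step.)
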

\begin{remark}
When the hypersurface has only one principal curvature, not necessarely constant, i.e.,when  the hypersurface is umbilical, it is easy to conclude  that the mean curvature is constant and hence the hypersurface is totally geodesic.
\end{remark}
\begin{proof}
If $M$ has only one constant principal curvature, then $M$ is an umbilical hypersurface with constant mean curvature. Then in [TU], Proposition 1, it was proved that $M$ is congruent to an open subset of $\s^1\times\s^2$.

Now, we suppose that  $M$ has two constant principal curvatures. Let $\lambda_1,\lambda_2$ be the corresponding different principal curvatures having  $\lambda_1$ multiplicity one and  $\lambda_2$ multiplicity two. 

Under these conditions, let $E_1$ be a unit vector field on $M$ such that $AE_1=\lambda_1 E_1$. Then,  we have that the second fundamental form and its covariant derivative are given by
\begin{equation}\label{2curvaturas}
\begin{split}
\sigma(V,W)=\lambda_2 \langle V,W\rangle+(\lambda_1-\lambda_2)\langle V,E_1\rangle\langle W, E_1\rangle,\\
(\nabla\sigma)(Z,V,W)=(\lambda_1-\lambda_2)\big(\langle W,E_1\rangle\langle V,\nabla_ZE_1\rangle+\langle V,E_1\rangle\langle W,\nabla_ZE_1\rangle\big),
\end{split}
\end{equation}
for any vector fields $V,W,Z$ on $M$.

Using \eqref{2curvaturas}, Codazzi equation and the fact that $M$ has constant mean curvature, we obtain 
\begin{eqnarray*}
0=\sum_{i=1}^3(\nabla\sigma)(Z,e_i,e_i)=\sum_{i=1}^3(\nabla\sigma)(e_i,Z,e_i)=\\
(\lambda_1-\lambda_2)\big( \langle E_1,Z\rangle\hbox{div}\,E_1+\langle\nabla_{E_1}E_1,Z\rangle\big),
\end{eqnarray*}
for any vector field $Z$ on $M$, being $\{e_1,e_2,e_3\}$  an orthonormal reference on $M$ and $\hbox{div}$ the divergence operator.

As $|E_1|^2=1$, from the last equation we get that the vector field $E_1$ is a geodesic vector field with zero divergence, i.e., 
\begin{equation}\label{div}
\hbox{div}\,E_1=0 \quad\hbox{and}\quad\nabla_{E_1}E_1=0 .
\end{equation}
Using \eqref{div} in the second equation of \eqref{2curvaturas} we obtain
\[
(\nabla\sigma)(E_1,V,W)=0,\quad (\nabla\sigma)(V,E_1,W)=(\lambda_1-\lambda_2)\langle\nabla_VE_1,W\rangle,
\]
and hence, the Codazzi equation says that
\begin{equation}\label{derivadaE_1}
\nabla_VE_1=\frac{\langle X,V\rangle P^{T}E_1-\langle X,E_1\rangle P^{T}V}{2(\lambda_1-\lambda_2)},
\end{equation}
for any vector field $V$ tangent to $M$, where $P^{T}$ denotes the tangential component of $P$, i.e., $P^{T}Z=PZ-\langle X,Z\rangle N$.

Now, from \eqref{div} and \eqref{derivadaE_1} and making a direct computation, we obtain that the Ricci curvature of $E_1$ is given by
\[
 \Ric(E_1)=-\frac{\langle X,E_1\rangle^2}{2(\lambda_1-\lambda_2)^2}.
 \]

 But, on the other hand, the Gauss equation says that 
 \[
 \Ric(E_1)=\frac{1}{2}+2\lambda_1\lambda_2+\frac{\langle X, E_1\rangle^2}{2}-\frac{C\langle PE_1,E_1\rangle}{2},
 \] 
 and hence finally we get that 
 \begin{equation}\label{ricci}
 1+4\lambda_1\lambda_2=C\langle PE_1,E_1\rangle-(1+\frac{1}{(\lambda_1-\lambda_2)^2})\langle X,E_1\rangle^2.
 \end{equation}

Now we proceed as follows. From Lemma~\ref{le:C,X},(1) and \eqref{derivadaE_1} it is clear that
\begin{gather*}
E_1(C)=-2\lambda_1\langle X,E_1\rangle,\quad E_1(\langle PE_1,E_1\rangle)=2\lambda_1\langle X,E_1\rangle,\\ E_1(\langle X,E_1\rangle)=\lambda_1(C-\langle PE_1,E_1\rangle).
\end{gather*}
Hence, taking derivatives in \eqref{ricci} with respect to $E_1$ and taking into account the above expressions, we get that
\[
0=\langle X,E_1\rangle\big(\langle PE_1,E_1\rangle-C\big).
\]
Taking derivatives  again this equation with respect to $E_1$ and using again the above expressions we obtain $4\langle X,E_1\rangle^2=(\langle PE_1,E_1\rangle -C)^2$, and hence the above equation implies that
\[
\langle X,E_1\rangle=0,\quad \langle PE_1,E_1\rangle=C.
\]
Now, \eqref{ricci} becomes in $C^2=1+4\lambda_1\lambda_2.$ This means that $C$ is a constant function and the result follows from Corollary~\ref{co:Cconstant}.
\end{proof}

\subsection{Hypersurfaces with three constant principal curvatures}

From now on,  let $\Phi:M\rightarrow\s^2\times\s^2$ be an orientable hypersurface with three different constant principal curvatures $\{\lambda_1,\lambda_2,\lambda_3\}$. Then there exists a trivialization of $M$ by unit vector fields $\{ E_1,E_2,E_3\}$ with $AE_i=\lambda_i E_i, \,i=1,2,3$. These principal curvatures are the roots of the polynomial 
\begin{equation}\label{polynomial}
\lambda^3-3H\lambda^2+\frac{\rho-2}{2}\lambda-K=0.
\end{equation}
 For simplicity, we will denote 
\begin{gather*}
\sigma_{ij}=\sigma(E_i,E_j),\quad \nabla\sigma_{ijk}=(\nabla\sigma)(E_i,E_j,E_k),\quad P_{ij}=\langle PE_i,E_j\rangle, \\b_i=\langle X, E_i\rangle, \quad\Gamma_{ij}^k=\langle\nabla_{E_i}E_j,E_k\rangle. 
\end{gather*}
\begin{lemma}\label{le:simbolos}
The above functions satisfy the following properties
\begin{enumerate}
\item $\Gamma_{ij}^k+\Gamma_{ik}^j=0, \quad\quad 1\leq i,j,k\leq 3$.
\item $\nabla\sigma_{ijk}+(\lambda_k-\lambda_j)\Gamma_{ij}^k=0,\quad \quad 1\leq i,j,k\leq 3$.
\item $(\lambda_k-\lambda_j)\Gamma_{ij}^k-(\lambda_k-\lambda_i)\Gamma_{ji}^k=\frac{1}{2}\big(b_j P_{ik}-b_i P_{jk}\big),\quad\, 1\leq i,j,k\leq 3$
\item $\Gamma_{ii}^j=\frac{b_i P_{ij}-b_j P_{ii}}{2(\lambda_i-\lambda_j)},\quad\quad 1\leq i,j\leq 3,\quad i\not=j.$
\item $ (\lambda_i-\lambda_j)\Gamma_{ii}^j=-(\lambda_k-\lambda_j)\Gamma_{kk}^j,\quad\quad 1\leq i,j\leq 3,\quad  i\not=j\not=k.$
\end{enumerate}
\end{lemma}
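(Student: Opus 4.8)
The plan is to deduce all five identities from the basic structural facts at our disposal: the compatibility of the connection $\nabla$ with the metric, the Codazzi equation of Section 2, Lemma~\ref{le:C,X}(1), and the algebraic relations \eqref{P} coming from $P\in SO(4)$. I would carry the proof out in the order (1), (2), (3), (4), (5), since each later item leans on the earlier ones.

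First, item (1) is immediate: differentiating $\langle E_j,E_k\rangle=\delta_{jk}$ in the direction $E_i$ gives $\langle\nabla_{E_i}E_j,E_k\rangle+\langle E_j,\nabla_{E_i}E_k\rangle=0$, i.e. $\Gamma_{ij}^k+\Gamma_{ik}^j=0$. For item (2), I compute $\nabla\sigma_{ijk}=(\nabla_{E_i}\sigma)(E_j,E_k)=E_i(\sigma_{jk})-\sigma(\nabla_{E_i}E_j,E_k)-\sigma(E_j,\nabla_{E_i}E_k)$; since $\sigma_{jk}=\lambda_j\delta_{jk}$ is constant and $A$ is diagonal in the frame $\{E_i\}$, this collapses to $-\lambda_k\Gamma_{ij}^k-\lambda_j\Gamma_{ik}^j$, and applying (1) to the second term yields $-\lambda_k\Gamma_{ij}^k+\lambda_j\Gamma_{ij}^k=(\lambda_j-\lambda_k)\Gamma_{ij}^k$, hence $\nabla\sigma_{ijk}+(\lambda_k-\lambda_j)\Gamma_{ij}^k=0$. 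Item (3) then follows by writing the Codazzi equation of Section 2 for the triple $(E_i,E_j,E_k)$: the left side is $\nabla\sigma_{ijk}-\nabla\sigma_{jik}=-(\lambda_k-\lambda_j)\Gamma_{ij}^k+(\lambda_k-\lambda_i)\Gamma_{ji}^k$ by item (2), while the right side is $\tfrac12(\langle X,E_i\rangle\langle PE_j,E_k\rangle-\langle X,E_j\rangle\langle PE_i,E_k\rangle)=\tfrac12(b_iP_{jk}-b_jP_{ik})$; rearranging signs gives exactly (3).

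For item (4), I specialize (3) to $j=i$ (so the summed indices are $i,i,j$ with $i\neq j$): then $\Gamma_{ii}^j$ appears with coefficient $(\lambda_j-\lambda_i)$ on one side and $\Gamma_{ii}^j$ with coefficient $(\lambda_j-\lambda_i)$ subtracted on the other, but one must be careful — taking $(i,j,k)=(i,i,j)$ in (3) the two $\Gamma$-terms coincide as $\Gamma_{ii}^j-\Gamma_{ii}^j=0$, which is vacuous, so instead I use Lemma~\ref{le:C,X}(1), namely $\nabla_V X=CAV-P^TAV$, together with $X=\sum_m b_m E_m$ and $P^T E_i=\sum_m P_{mi}E_m$. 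Taking $V=E_i$ and reading off the $E_j$-component of both sides: the left side is $E_i(b_j)+\sum_m b_m\Gamma_{im}^j$, and the right side is $C\lambda_i\delta_{ij}-\lambda_i P_{ij}$; but a cleaner route is to use the Hessian-type identity or, more directly, to combine $\nabla C=-2AX$ (giving $E_i(C)=-2\lambda_i b_i$) with the third relation in \eqref{P} and the expression $\nabla_{E_i}E_i$ obtained from (3). The most economical derivation of (4) is: apply (3) with the roles chosen so that $\Gamma_{ii}^j$ is isolated, using that $\langle\nabla_{E_i}E_i,E_j\rangle$ can be compared against $\langle\nabla_{E_i}E_j,E_i\rangle=-\Gamma_{ii}^j$ via (1), and then invoke the last identity of \eqref{P}, $b_iP_{ij}-b_jP_{ii}=-b_kP_{kj}+b_jP_{kk}$, to eliminate the $k$-terms; matching with (3) forces $2(\lambda_i-\lambda_j)\Gamma_{ii}^j=b_iP_{ij}-b_jP_{ii}$. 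Finally, item (5) is a consequence of (4) together with the fact that the left-hand side of the first relation in \eqref{P} is symmetric under $j\leftrightarrow k$: writing $\Gamma_{ii}^j$ and $\Gamma_{kk}^j$ from (4) and using $b_i P_{ij}-b_jP_{ii}=-(b_kP_{kj}-b_jP_{kk})$ (which is exactly the third line of \eqref{P}) gives $(\lambda_i-\lambda_j)\Gamma_{ii}^j=-(\lambda_k-\lambda_j)\Gamma_{kk}^j$.

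The main obstacle I anticipate is item (4): the naive specialization of the Codazzi identity (3) degenerates, so one genuinely needs to feed in information from Lemma~\ref{le:C,X}(1) (the formula for $\nabla_V X$) and from the algebraic constraints \eqref{P} on the matrix of $P$ in an adapted frame, and to bookkeep the index $k$ carefully since $\{i,j,k\}=\{1,2,3\}$ throughout. Everything else is a short, if slightly fiddly, chain of substitutions; the content is entirely in correctly assembling Codazzi, the $SO(4)$-relations \eqref{P}, and Lemma~\ref{le:C,X}.
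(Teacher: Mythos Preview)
Your derivations of (1), (2), (3), and (5) match the paper's proof exactly. The only substantive divergence is in item (4), and here you have made things much harder than necessary.

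You tried the specialization $(i,j,k)\mapsto(i,i,j)$ in (3) and correctly observed that it is vacuous. But the paper instead sets $k=i$ in (3), i.e.\ takes the triple $(i,j,i)$. Then the second term $(\lambda_k-\lambda_i)\Gamma_{ji}^k$ vanishes automatically, and the identity reads
\[
(\lambda_i-\lambda_j)\,\Gamma_{ij}^i=\tfrac12\bigl(b_jP_{ii}-b_iP_{ij}\bigr).
\]
Now (1) gives $\Gamma_{ij}^i=-\Gamma_{ii}^j$, and (4) drops out immediately. No appeal to Lemma~\ref{le:C,X}(1) or to the relations~\eqref{P} is needed at this stage; those only enter for (5), exactly as you say. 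Your ``most economical derivation'' paragraph gestures toward this (``apply (3) with the roles chosen so that $\Gamma_{ii}^j$ is isolated''), but then brings in the third line of~\eqref{P} to ``eliminate the $k$-terms'' --- there are no $k$-terms to eliminate once you pick the right specialization, so that part of your sketch is off-track. Replace the entire discussion of (4) by the two-line argument above and your proof is complete and coincides with the paper's.
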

\begin{proof}
(1) is trivial.  Taking derivatives in $\sigma_{jk}=\lambda_j\delta_{jk}$ with respect to $E_i$ we get (2). From Codazzi equation and (2) we prove (3). Taking $k=i$ in (3) and using (1) we get (4). Finally from \eqref{P} and (4) it follows (5).
\end{proof}
\begin{lemma}\label{le:Lambda}
The functions $\Lambda_i=b_i^2-CP_{ii},\,i=1,2,3,$ satisfy the following equation
\[
B\Lambda=B_0+D,
\]
where
\[
B=\left( \begin{array}{ccc} \lambda_2 & -\lambda_1 & 2(\lambda_1-\lambda_2) \\ \lambda_3 & 2(\lambda_1-\lambda_3) & -\lambda_1 \\ 2(\lambda_2-\lambda_3) &   \lambda_3 & -\lambda_2  \end{array} \right),\quad
\Lambda=\left( \begin{array}{c} \Lambda_1 \\ \Lambda_2\\ \Lambda_3 \end{array} \right) 
\]
\[
B_0=\left( \begin{array}{c} (\lambda_1-\lambda_2)(1+2\lambda_1\lambda_2) \\ (\lambda_1-\lambda_3)(1+2\lambda_1\lambda_3)\\ (\lambda_2-\lambda_3)(1+2\lambda_2\lambda_3) \end{array} \right),\quad
D=\left( \begin{array}{c} D_{12} \\ D_{13}\\ D_{23} \end{array} \right) 
\]
and $D_{ij},\,i\not= j$ are defined by
\[
D_{ij}=4(\lambda_i-\lambda_j)((\Gamma_{ii}^j)^2+(\Gamma_{jj}^i)^2)+4(\lambda_k-\lambda_j)(\Gamma_{ij}^k)^2-4(\lambda_k-\lambda_i)(\Gamma_{ji}^k)^2,
\]
with $k\not= i\not= j$.

\end{lemma}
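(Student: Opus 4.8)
The plan is to derive the stated linear system by computing, in two different ways, the Ricci curvatures $\Ric(E_i)$ of the orthonormal frame $\{E_1,E_2,E_3\}$, and then combining the results pairwise. First I would use the Gauss equation together with the explicit form of $\bar R$ given in Section~2: since $AE_i=\lambda_iE_i$, the curvature term $\sum_{j\ne i}\bar R(E_i,E_j,E_j,E_i)$ expands, using $\bar R(v,w,x,y)=\tfrac12(\langle v,y\rangle\langle w,x\rangle-\langle v,x\rangle\langle w,y\rangle+\langle Pv,y\rangle\langle Pw,x\rangle-\langle Pv,x\rangle\langle Pw,y\rangle)$, into a piece coming from the constant-curvature part and a piece built from the entries $P_{ij}$ and $b_i$. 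Using the matrix identities \eqref{P} (in particular $CP_{ii}-b_i^2=P_{jj}P_{kk}-P_{jk}^2$, i.e. $-\Lambda_i = P_{jj}P_{kk}-P_{jk}^2$) this produces an expression of the form
\[
\Ric(E_i)=\tfrac12+\lambda_i(\lambda_j+\lambda_k)+\tfrac12\big(\Lambda_j+\Lambda_k-\Lambda_i\big)+(\text{terms quadratic in }P_{jk}),
\]
where the genuinely quadratic leftover is controlled again by \eqref{P}; the point is that $\Ric(E_i)$ becomes an affine function of $\Lambda_1,\Lambda_2,\Lambda_3$ plus known constants in the $\lambda$'s.

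Second, I would compute $\Ric(E_i)$ intrinsically. Since the $\lambda_i$ are constant, Lemma~\ref{le:simbolos} expresses every connection coefficient $\Gamma_{ij}^k$ either through a $\nabla\sigma$-term (part (2)) or, in the diagonal case $\Gamma_{ii}^j$, algebraically through the $b$'s and $P$'s (part~(4)). Writing $\Ric(E_i)=\sum_{j\ne i}\langle R(E_i,E_j)E_j,E_i\rangle$ and using the standard curvature formula in terms of the $\Gamma$'s — together with the derivatives of the $\Gamma_{ij}^k$ obtained by differentiating the relations of Lemma~\ref{le:simbolos} — the directional derivatives of $b_i$ and $P_{ij}$ can be substituted in from Lemma~\ref{le:C,X}(1) (which gives $\nabla C=-2AX$, i.e. $E_i(C)=-2\lambda_ib_i$, and $\nabla_{E_i}X=CAE_i-P^TAE_i$, yielding $E_i(b_j)$ and $E_i(P_{jk})$ in closed form). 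This yields a second affine expression for $\Ric(E_i)$, now in terms of the $\Lambda$'s \emph{and} the quantities $D_{ij}$, which are exactly the natural combinations $4(\lambda_i-\lambda_j)((\Gamma_{ii}^j)^2+(\Gamma_{jj}^i)^2)+4(\lambda_k-\lambda_j)(\Gamma_{ij}^k)^2-4(\lambda_k-\lambda_i)(\Gamma_{ji}^k)^2$ that appear when one collects the quadratic connection terms.

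Third, equating the two expressions for each $\Ric(E_i)$ and then forming the three pairwise differences $\Ric(E_i)-\Ric(E_j)$ (equivalently, isolating the contribution attached to the pair $\{i,j\}$), the constant-curvature and mixed terms reorganize into precisely the rows of $B\Lambda=B_0+D$: the row indexed by $(i,j)$ has left-hand side a linear combination of $\Lambda_1,\Lambda_2,\Lambda_3$ with coefficients read off from $B$, the inhomogeneous term $(\lambda_i-\lambda_j)(1+2\lambda_i\lambda_j)$ coming from $\tfrac12$ plus the $\lambda_i\lambda_j$-cross terms, and the remainder being $D_{ij}$. The matrix $B$ is built so that the coefficient of $\Lambda_k$ in the $(i,j)$-row is $2(\lambda_i-\lambda_j)$ while the coefficients of $\Lambda_i,\Lambda_j$ are $\mp\lambda_j,\pm\lambda_i$ — this is just the bookkeeping of how $\Lambda_i,\Lambda_j,\Lambda_k$ enter $\Ric(E_i)-\Ric(E_j)$.

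\textbf{Main obstacle.} The conceptual steps are routine applications of Gauss--Codazzi and Lemmas~\ref{le:C,X} and \ref{le:simbolos}; the real work is the algebraic bookkeeping in the second computation — correctly differentiating the $\Gamma$'s, substituting $E_i(b_j)$ and $E_i(P_{jk})$, and verifying that all the genuinely quadratic-in-$\Gamma$ debris collapses exactly into the stated $D_{ij}$ (with the right signs and the right pairing of indices $k\ne i\ne j$) rather than into some messier expression. Controlling these sign conventions and making sure the off-diagonal $P_{jk}$-terms cancel against those produced by \eqref{P} in the Gauss-equation side is where care is needed; once the two sides are written in the same basis $\{1,\lambda\text{-monomials},\Lambda_1,\Lambda_2,\Lambda_3,D_{ij}\}$, the identification with $B\Lambda=B_0+D$ is immediate.
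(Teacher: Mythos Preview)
Your overall strategy --- compute the curvature of the frame intrinsically from the $\Gamma$'s and extrinsically from the Gauss equation, then equate --- is exactly what the paper does. The difference is that the paper works directly with the \emph{sectional} curvatures $K_{ij}$ rather than with Ricci curvatures. For each pair $(i,j)$ it expands
\[
K_{ij}=E_i(\Gamma_{jj}^i)+E_j(\Gamma_{ii}^j)-(\Gamma_{ii}^j)^2-(\Gamma_{jj}^i)^2-\Gamma_{ii}^k\Gamma_{jj}^k-\Gamma_{ij}^k(\Gamma_{jk}^i+\Gamma_{kj}^i)+\Gamma_{ji}^k\Gamma_{kj}^i,
\]
simplifies the derivative terms using Lemma~\ref{le:simbolos}(4) together with Lemma~\ref{le:C,X}(1), and matches the result against the Gauss value $K_{ij}=\tfrac12(1+P_{ii}P_{jj}-P_{ij}^2)+\lambda_i\lambda_j$. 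The identity $P_{ii}P_{jj}-P_{ij}^2=-\Lambda_k$ from \eqref{P} then turns this single comparison (after clearing the factor $2(\lambda_i-\lambda_j)$) into exactly the $(i,j)$-row $\lambda_j\Lambda_i-\lambda_i\Lambda_j+2(\lambda_i-\lambda_j)\Lambda_k=(\lambda_i-\lambda_j)(1+2\lambda_i\lambda_j)+D_{ij}$. So each row of $B$ comes from one sectional curvature, and the $D_{ij}$ is precisely the quadratic-in-$\Gamma$ residue of that single $K_{ij}$ computation.

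Your detour through pairwise Ricci differences, as written, does not land on the stated system. First, $\Ric(E_i)-\Ric(E_j)=K_{ik}-K_{jk}$, so this difference involves the two \emph{complementary} sectional curvatures, not $K_{ij}$; it does not ``isolate the contribution attached to the pair $\{i,j\}$''. Second, the three differences are linearly dependent (they sum to zero), hence span only a $2$-dimensional space and cannot reproduce the three independent rows of $B$ (recall $\det B\neq 0$). Concretely, on the Gauss side one gets $\Ric(E_i)-\Ric(E_j)=\lambda_k(\lambda_i-\lambda_j)+\tfrac12(\Lambda_i-\Lambda_j)$, whose $\Lambda$-coefficients $(\tfrac12,-\tfrac12,0)$ match no row of $B$; and on the intrinsic side the quadratic debris will be a combination $D_{ik}\!-\!D_{jk}$ (with mismatched scalings), not the single $D_{ij}$ the lemma records. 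The fix is simply to drop the Ricci packaging and work with $K_{ij}$ itself, exactly as the paper does; your description of the bookkeeping obstacle (differentiating the $\Gamma$'s and substituting $E_i(b_j)$, $E_i(P_{jk})$ via Lemma~\ref{le:C,X}) is then accurate.
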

\begin{remark}
We observe that Lemma~\ref{le:Lambda} provides that the functions $\Lambda_i$ satisfy a compatible linear system because  
$$\det B=-6(\lambda_1-\lambda_2)(\lambda_1-\lambda_3)(\lambda_2-\lambda_3)\not=0.$$ 

\end{remark}
\begin{proof}
For $i,j\in\{1,2,3\},\, i\not= j$, let $K_{ij}$ be the sectional curvature of the plane spanned by $\{E_i,E_j\}$. Then from the definition of the sectional curvature we have that
\begin{eqnarray*}
K_{ij}&=&E_i(\Gamma_{jj}^i)+E_j(\Gamma_{ii}^j)-(\Gamma_{ii}^j)^2-(\Gamma_{jj}^i)^2-\Gamma_{ii}^k\Gamma_{jj}^k\\
&-&\Gamma_{ij}^k(\Gamma_{jk}^i+\Gamma_{kj}^i)+\Gamma_{ji}^k\Gamma_{kj}^i,
\end{eqnarray*}
where $k\not= i$ and $k\not= j$.

Now, using Lemma~\ref{le:simbolos} and Lemma~\ref{le:C,X}, (1) and from a straightforward computation it is easy to conclude that

\begin{gather*}
K_{ij}=-2((\Gamma_{ii}^j)^2+(\Gamma_{jj}^i)^2)-\frac{1}{2}(P_{ii}P_{jj}-P_{ij}^2)\\
+\frac{1}{2(\lambda_j-\lambda_i)}(\lambda_i\Lambda_j-\lambda_j\Lambda_i)\\
+\frac{2}{(\lambda_j-\lambda_i)}((\lambda_k-\lambda_j)(\Gamma_{ij}^k)^2-(\lambda_k-\lambda_i)(\Gamma_{ji}^k)^2),\quad k\not=i\not=j.
\end{gather*}

Finally using the Gauss equation, $2K_{ij}=1+P_{ii}P_{jj}-P_{ij}^2+2\lambda_i\lambda_j$, and \eqref{P} in the above equation we obtain that
\[
\lambda_j \Lambda_i-\lambda_i \Lambda_j+2(\lambda_i-\lambda_j) \Lambda_k=(\lambda_i-\lambda_j)(1+2\lambda_i\lambda_j)+D_{ij},\quad i\not=j\not=k,
\] 
and  the Lemma is proved.
\end{proof}

\begin{lemma}\label{le:C=1}
If there exists a point  $p$ on $M$  with $C^2(p)=1$, then
\begin{gather*}
\Lambda_i(p)=\frac{2}{3}(\lambda_i^2-3H\lambda_i+\rho-\frac{1}{2})\\
+\frac{12|\nabla\sigma|^2(p)}{(\det B)^2}\{2(\rho-2-6H^2)\lambda_i^2+3(3K+12H^3-\frac{5}{2}H(\rho-2))\lambda_i\\+(\rho-2)(\rho-2-3H^2)-18HK\}, \quad i\in\{1,2,3\},
\end{gather*}
and 
\[
1-2\rho=18\{(\rho-2)(\rho-2-3H^2/2)-27HK\}\frac{|\nabla\sigma|^2(p)}{(\det B)^2}.
\]
\end{lemma}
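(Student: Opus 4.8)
The plan is to exploit the hypothesis $C^2(p)=1$, which forces $X(p)=0$ (since $|X|^2 = 1-C^2$), hence $b_i(p) = 0$ for all $i$. First I would evaluate Lemma~\ref{le:Lambda} at $p$: the system becomes $B\Lambda(p) = B_0 + D(p)$, so $\Lambda(p) = B^{-1}(B_0 + D(p))$, and the task reduces to computing $B^{-1}B_0$ and $B^{-1}D(p)$ explicitly. For the first term, I expect $B^{-1}B_0$ to reduce — after using the elementary symmetric functions $\lambda_1+\lambda_2+\lambda_3 = 3H$, $\lambda_1\lambda_2+\lambda_1\lambda_3+\lambda_2\lambda_3 = (\rho-2)/2$, $\lambda_1\lambda_2\lambda_3 = K$ from \eqref{polynomial} — to the stated quantity $\tfrac{2}{3}(\lambda_i^2 - 3H\lambda_i + \rho - \tfrac12)$; this is a determinant/Cramer computation that should collapse nicely because of the symmetry built into $B$.

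For the second term I would first simplify $D(p)$. Since $b_i(p)=0$, Lemma~\ref{le:simbolos}(4) gives $\Gamma_{ii}^j(p) = 0$ for $i\neq j$, so the $D_{ij}$ collapse to $D_{ij}(p) = 4(\lambda_k-\lambda_j)(\Gamma_{ij}^k)^2 - 4(\lambda_k-\lambda_i)(\Gamma_{ji}^k)^2$ with $k\neq i\neq j$. Then I would re-express everything in terms of $|\nabla\sigma|^2(p)$. Using Lemma~\ref{le:simbolos}(2), $\nabla\sigma_{ijk} = -(\lambda_k-\lambda_j)\Gamma_{ij}^k$, and since the only nonzero connection symbols left are the off-diagonal $\Gamma_{ij}^k$ (all with distinct indices), $|\nabla\sigma|^2(p) = \sum (\nabla\sigma_{ijk})^2$ becomes a weighted sum of the $(\Gamma_{ij}^k)^2$. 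Moreover Lemma~\ref{le:simbolos}(3) with $b_i(p)=0$ gives the relation $(\lambda_k-\lambda_j)\Gamma_{ij}^k = (\lambda_k-\lambda_i)\Gamma_{ji}^k$, which lets me solve for all the $(\Gamma_{ij}^k)^2$ in terms of a single scalar, and express both $D(p)$ and $|\nabla\sigma|^2(p)$ proportionally to that scalar. Feeding $B^{-1}D(p)$ through Cramer's rule and dividing out, the coefficient of $|\nabla\sigma|^2(p)$ should organize into the stated polynomial in $\lambda_i$ divided by $(\det B)^2$.

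Finally, for the scalar identity $1 - 2\rho = 18\{(\rho-2)(\rho-2-3H^2/2) - 27HK\}|\nabla\sigma|^2(p)/(\det B)^2$, I would use a trace-type consistency condition: summing the three components of $\Lambda(p)$ (or taking an appropriate linear combination), the left side gives $\sum_i \Lambda_i(p) = \sum_i (b_i^2 - CP_{ii})(p) = 0 - C(p)\operatorname{tr}(P^T)(p)$. From the $4\times 4$ matrix display of $P$ and $\operatorname{tr} P = 0$ one has $\operatorname{tr}(P^T) = -C$, so $\sum_i \Lambda_i(p) = C^2(p) = 1$. Setting $\sum_i$ of the already-derived formula for $\Lambda_i(p)$ equal to $1$ and again using the symmetric-function identities to evaluate $\sum_i \lambda_i = 3H$, $\sum_i \lambda_i^2 = 9H^2 - (\rho-2)$ should produce exactly the claimed relation after collecting terms.

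The main obstacle will be the bookkeeping in the second paragraph: correctly tracking which $\Gamma_{ij}^k$ survive, converting the Lemma~\ref{le:simbolos}(3) relations into a clean single-parameter description of $D(p)$, and verifying that after Cramer's rule the messy rational expression genuinely simplifies to the stated cubic in $\lambda_i$ over $(\det B)^2$ — this is where an algebra slip is most likely and where I would be most careful to keep the symmetric functions $H$, $\rho$, $K$ as the only surviving invariants. The determinant computations for $B^{-1}B_0$ and the final trace identity are more routine once the symmetric-function substitutions are in hand.
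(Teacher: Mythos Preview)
Your proposal is correct and follows essentially the same route as the paper: reduce to $b_i(p)=0$, kill the $\Gamma_{ii}^j$ terms via Lemma~\ref{le:simbolos}(4), use Lemma~\ref{le:simbolos}(3) at $p$ to relate the surviving $\Gamma_{ij}^k$ and thereby express $D(p)$ as a multiple of $|\nabla\sigma|^2(p)$, solve the linear system of Lemma~\ref{le:Lambda}, and finally obtain the scalar identity from $\sum_i\Lambda_i(p)=1$. The paper packages the simplification of $D(p)$ slightly more compactly---writing it directly as $\tfrac{4|\nabla\sigma|^2(p)}{\det B}$ times the vector $((\lambda_1-\lambda_2)^2,\,-(\lambda_1-\lambda_3)^2,\,(\lambda_2-\lambda_3)^2)^T$---but this is exactly the single-parameter description you anticipate.
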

\begin{proof}
Suppose now that $p$ is a point with $C^2(p)=1$. Then $X_p=0$ and so $b_i(p)=0,\,1\leq i\leq 3$. Using Lemma~\ref{le:simbolos} we have that $$D_{ij}(p)=\frac{4(\lambda_j-\lambda_i)(\lambda_k-\lambda_j)}{\lambda_k-\lambda_i}(\Gamma_{ij}^k)^2(p),\quad 1\leq i,j,k\leq 3,\,i<j, \,k\not=i,\,k\not= j.$$
On the other hand, using again Lemma~\ref{le:simbolos} it is easy to check that
\[
|\nabla\sigma|^2(p)=6(\lambda_k-\lambda_j)^2(\Gamma_{ij}^k)^2(p),\quad i\not= j\not= k.
\]
From the last two equatios it follows that 
\[
D(p)=\frac{4|\nabla\sigma|^2(p)}{\det B}\left( \begin{array}{c} (\lambda_1-\lambda_2)^2 \\ -(\lambda_1-\lambda_3)^2\\ (\lambda_2-\lambda_3)^2 \end{array} \right) 
\]
Now, solving the system of Lemma~\ref{le:Lambda} and using \eqref{polynomial}  we get the first part. 
The last assertion in the Lemma follows from the fact that $\sum_{i=1}^3\Lambda_i=1$, which easily follows from  Lemma~\ref{le:simbolos}. 
\end{proof}

\begin{theorem}\label{te:3}
Let $\Phi:M\rightarrow \s^2\times\s^2$ be an orientable hypersurface with three constant principal curvatures.
\begin{enumerate}
\item If the scalar curvature $\rho$ satisfies $2\rho\not=1$ and the Gauss-Kronecker curvature $K=0$, then $C^2<1$.
\item If $p_0$ is a critical point  of the function $C$, then either $C^2(p_0)=1$  or $C(p_0)=0$. Moreover, in the second case, the Gauss-Kronecker curvature $K=0$ and the scalar curvature  $\rho=1$.
\end{enumerate}

\end{theorem}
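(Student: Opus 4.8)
The plan is to analyse a critical point $p_0$ of $C$ directly from the identity $\nabla C=-2AX$ of Lemma~\ref{le:C,X}(1): writing $\{E_1,E_2,E_3\}$ for the (global) trivialization by principal directions, $AE_i=\lambda_iE_i$, one has $\nabla C=-2\sum_i\lambda_ib_iE_i$, so $p_0$ is critical for $C$ if and only if $AX(p_0)=0$. I treat (2) first. If $K=\lambda_1\lambda_2\lambda_3\neq0$ then $A_{p_0}$ is invertible, hence $X(p_0)=0$ and $C^2(p_0)=1-|X(p_0)|^2=1$. Suppose $K=0$; since the $\lambda_i$ are distinct, exactly one vanishes, say $\lambda_1=0$, so $\ker A_{p_0}=\mathbb{R}E_1$ and $AX(p_0)=0$ gives $X(p_0)\in\mathbb{R}E_1$, i.e. $b_2(p_0)=b_3(p_0)=0$. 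If $b_1(p_0)=0$ then $X(p_0)=0$ and $C^2(p_0)=1$; so assume $b_1(p_0)\neq0$, equivalently $C^2(p_0)<1$, and aim at $C(p_0)=0$, $\rho=1$. At $p_0$ the last group of relations in \eqref{P} gives $P_{12}=P_{13}=0$ and $P_{22}=-P_{33}$, whence $P_{11}=-C$ from $\traza P=0$, and the first group gives $P_{22}^2+P_{23}^2=1$; also $\Lambda_1(p_0)=b_1^2-CP_{11}=b_1^2+C^2=1$ and $\Lambda_2(p_0)=-CP_{22}=-\Lambda_3(p_0)$. Using Lemma~\ref{le:simbolos}(3)--(4) together with the symmetry of the Hessian of $C$ at the critical point $p_0$ (Lemma~\ref{le:C,X}(2)) one computes the $\Gamma_{ij}^k(p_0)$; substituting these and the data above into the three scalar identities of Lemma~\ref{le:Lambda} and eliminating $P_{22},P_{23}$ and the $\Gamma$'s, one is led first to $b_1^2=4(\lambda_2\lambda_3)^2$, i.e. $1-C^2(p_0)=(\rho-2)^2$ (recall $\lambda_2\lambda_3=(\rho-2)/2$), and then to a relation forcing $1+2\lambda_2\lambda_3=0$ (the only alternative would make $|\sigma|^2$ negative). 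Hence $\rho=1$, so $b_1^2=1$ and $C(p_0)=0$; since $K=0$ this is the second alternative.

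For (1), suppose towards a contradiction that $C^2(p)=1$ at some $p\in M$. As $C^2\le1$ on $M$ and $C(p)=\pm1\neq0$, the point $p$ is a maximum of $C^2$ and, since $\nabla(C^2)=2C\nabla C$, a critical point of $C$. Thus Lemma~\ref{le:C=1} applies, and with $K=0$ (and $\lambda_1=0$) its last identity becomes
\[
1-2\rho=18\,(\rho-2)\Bigl(\rho-2-\tfrac{3}{2}H^{2}\Bigr)\,\frac{|\nabla\sigma|^{2}(p)}{(\det B)^{2}}.
\]
At $p$ we have $X(p)=0$, so $\Lambda_i(p)=-C(p)P_{ii}(p)$ and $\bigl(P_{ij}(p)\bigr)_{1\le i,j\le 3}$ is a symmetric involution of trace $-C(p)$; in particular $\Lambda_i(p)\in[-1,1]$ and $P_{ij}(p)^2=(1-\Lambda_i(p))(1-\Lambda_j(p))$ for $i\neq j$. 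Inserting the explicit values of the $\Lambda_i(p)$ furnished by Lemma~\ref{le:C=1} into these pointwise constraints — and, since here \eqref{P} no longer forces $P_{12}(p)=P_{13}(p)=0$, also bringing in the off-diagonal components $\bar R(E_i,E_j,E_i,E_k)$ of the Gauss equation to control the remaining Christoffel symbols at $p$ — a computation forces $1-2\rho=0$, contradicting the hypothesis. Hence no such $p$ exists and $C^2<1$ on all of $M$.

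In both parts the geometric reduction (to a single critical point, then reading off the first-order consequences) is short, and the bulk of the work is the purely algebraic elimination of the auxiliary quantities $P_{ij}$, $b_i$ and $\Gamma_{ij}^k$ from the system formed by \eqref{P}, Lemma~\ref{le:simbolos}, Lemma~\ref{le:C,X} and Lemma~\ref{le:Lambda} (resp. Lemma~\ref{le:C=1}). I expect (1) to be the harder of the two: at a point with $C^2=1$ one loses the simplifications $P_{12}=P_{13}=0$ available in (2), which is precisely why the $|\nabla\sigma|^2$-term — and hence Lemma~\ref{le:C=1} — is unavoidable, and why the mixed Gauss-equation components must also be used to pin down the last Christoffel symbols before the relation on $\rho$ can be isolated.
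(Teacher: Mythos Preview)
Your opening reductions are correct: $\nabla C=-2AX$, so at a critical point $AX(p_0)=0$; if $K\neq0$ then $A$ is invertible and $C^2(p_0)=1$, while if $\lambda_1=0$ then $b_2(p_0)=b_3(p_0)=0$ and your consequences from $P^2=I$ (including $P_{11}=-C$, $P_{22}=-P_{33}$, and the formula $P_{ij}^2=(1-\Lambda_i)(1-\Lambda_j)$ at a point with $C=1$) are right.

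The gap is that a purely \emph{pointwise} analysis does not supply enough equations. Two concrete problems: first, ``symmetry of the Hessian'' gives nothing --- $\nabla^2C$ is always symmetric, and the formula in Lemma~\ref{le:C,X}(2) is automatically so once Codazzi is used. Second, the off-diagonal Gauss components $R(E_i,E_j,E_j,E_k)$ you invoke for part~(1) involve derivatives like $E_j(\Gamma_{ij}^k)$ that are \emph{not} expressible through Lemma~\ref{le:simbolos}(4) and hence not determined by the first-order data at $p$ (unlike the diagonal components entering Lemma~\ref{le:Lambda}). At $p_0$ you end up with five unknowns $C,P_{22},P_{23},\Gamma_{12}^3,\Gamma_{21}^3$ and only the three identities of Lemma~\ref{le:Lambda}, one Codazzi relation from Lemma~\ref{le:simbolos}(3), and $P_{22}^2+P_{23}^2=1$; these do not force $C=0$, and your asserted intermediate relation $b_1^2=4(\lambda_2\lambda_3)^2$ does not follow from them.

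The missing idea, which the paper uses in \emph{both} parts, is to propagate along the integral curve $\gamma$ of $E_1$ through the point. Since $\lambda_1=0$ one has $E_1(C)=0$, so $C$ is constant on $\gamma$; in part~(2) an ODE argument for $(\langle X,E_2\rangle,\langle X,E_3\rangle)$ then shows $\nabla C\equiv0$ along all of $\gamma$, so $b_2(t)=b_3(t)=0$ identically. One can now \emph{differentiate} $P_{22},P_{23}$ along $\gamma$: after Lemma~\ref{le:Lambda} forces $\Lambda_2(t)$ to satisfy a polynomial with constant coefficients (hence to be constant), the relations $P_{22}'=2\Gamma_{12}^3P_{23}$ and $P_{23}'=-2\Gamma_{12}^3P_{22}$ yield $\Gamma_{12}^3(t)=0$. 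This is precisely the extra equation that closes the system and produces the contradiction with $C(p_0)\neq0$ (and then $\rho=1$). Part~(1) proceeds the same way: $C\equiv1$ along $\gamma$, differentiating $P_{22},P_{23}$ gives $P_{23}=0$ and $P_{22}=P_{33}$, hence $P_{11}=1$; feeding this back into Lemma~\ref{le:C=1} forces first $H=0$ and then $\rho=2$, contradicting the formula of Lemma~\ref{le:C=1} under the hypothesis $2\rho\neq1$.
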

\begin{proof}
First we are going to prove part (1). 

 Suposse that there exists a point $p_0\in M$ such that $C^2(p_0)=1$ and without loss of generality we can take  $C(p_0)=1$.  Then we are going to get a contradiction.

As $K=0$, one of the principal curvatures is zero, for instance $\lambda_1=0$. Let $\gamma:(-\epsilon,\epsilon)\rightarrow M$ be the integral curve of $E_1$ with $\gamma(0)=p_0$. Then

\[
C'(t)=\langle (\nabla C)(t),E_1(t)\rangle=-2\langle AX(t),E_1(t)\rangle=0,
\]
and hence  $C(t)=1$ for all $t\in(-\epsilon,\epsilon)$. Now, as the Gauss-Kronecker curvature $K=0$, from  Lemma~\ref{le:C=1} it follows that
\[
(1-2\rho)(\det B)^2=18(\rho-2)(\rho-2-3H^2/2)|\nabla\sigma|^2(t).
\]
As $1-2\rho\not=0$, we have that $|\nabla\sigma|^2(t)=6(\lambda_3-\lambda_2)^2(\Gamma_{12}^3)^2(t)$ is a non-null constant function and that $\rho\not=2$. Hence, from Lemma~\ref{le:C=1},  the functions $\Lambda_i(t)=-P_{ii}(t)$ are also constant. So
\[
0=P_{22}'(t)=2\Gamma_{12}^3(t)P_{23}(t),
\]
which implies that $P_{23}(t)=0$. Derivating again
\[
0=P_{23}'(t)=\Gamma_{12}^3(t)(P_{33}(t)-P_{22}(t)),
\]and so $P_{22}(t)=P_{33}(t)$. Using all the information about the $P_{ij}$ in \eqref{P} we obtain that

\[
P_{11}(t)=-P_{22}(t)=-P_{33}(t)=1.
\]
Using the first part of  Lemma~\ref{le:C=1}, and as $\Lambda_2(t)=\Lambda_3(t)$ and $3H=\lambda_2+\lambda_3$ we get that $$18H(\rho-2)\frac{|\nabla\sigma|^2(t)}{(\det B)^2}=0,$$ which implies that $H=0$ because $\rho\not=2$. 

 Now, as $\Lambda_1(t)=-1$, $H=0$ and $\lambda_1=0$,  first part of Lemma~\ref{le:C=1} says that
 \[
 -(\rho+1)=18(\rho-2)^2\frac{|\nabla\sigma|^2(t)}{(\det B)^2},
 \]
 that joint with the information given in the second part of  Lemma~\ref{le:C=1}
 \[
 1-2\rho =18(\rho-2)^2\frac{|\nabla\sigma|^2(t)}{(\det B)^2}
 \]
 imply that $\rho=2$, which is a contradiction. This proves (1).

\vspace{0.3cm}

Now, we are going to prove part (2).

Let $p_0$ be a point of $M$ such that $(\nabla C)(p_0)=0$ and $C^2(p_0)<1$. We are going to prove that  $C(p_0)=0$. 

In this case, $AX(p_0)=0$ and as $|X|^2=1-C^2(p_0)>0$, one of the three principal curvatures is zero. For instance $\lambda_1=0$ and so $X(p_0)=\sqrt{1-C^2(p_0)}E_1(p_0)$. 
Now, let $\gamma:(-\epsilon,\epsilon)\rightarrow M$ be the integral curve of $E_1$ with $\gamma(0)=p_0$. As 
\[
C'(t)=\langle (\nabla C)(t), E_1(t)\rangle=-2\langle AX(t),E_1(t)\rangle =0,
\]
because $AE_1=0$, then $C$ is constant along $\gamma$ and $(\nabla C)(t)=\alpha(t)E_2(t)+\beta(t)E_3(t)$. Hence, derivating with respect to $t$ we have that

\begin{gather*}
(\nabla^2C)(E_1,E_2)(t)=\alpha'(t)-\Gamma_{12}^3(t)\beta(t),\\ (\nabla^2C)(E_1,E_3)(t)=\beta'(t)+\Gamma_{12}^3(t)\alpha(t).
\end{gather*}
On the other hand, from  Lemma~\ref{le:C,X} we obtain that
\begin{gather*}
(\nabla^2C)(E_1,E_2)=-2\lambda_2\Gamma_{22}^1\langle X,E_2\rangle+2\lambda_3\Gamma_{21}^3\langle X,E_3\rangle,\\
(\nabla^2C)(E_1,E_3)=2\lambda_2\Gamma_{31}^2\langle X,E_2\rangle-2\lambda_3\Gamma_{33}^1\langle X,E_3\rangle.
\end{gather*}
But, by definition $\alpha(t)=-2\lambda_2\langle X,E_2\rangle(t)$ and  $\beta(t)=-2\lambda_3\langle X,E_3\rangle(t)$, and so the above information about the Hessian of $C$ says that $\alpha$ and $\beta$ satisfy the following ODE system
\begin{gather*}
\alpha'(t)=\Gamma_{22}^1(t)\alpha(t)+(\Gamma_{12}^3(t)-\Gamma_{21}^3(t))\beta(t)\\
\beta'(t)=-(\Gamma_{12}^3(t)+\Gamma_{31}^2(t))\alpha(t)+\Gamma_{33}^1(t)\beta(t).
\end{gather*}
As $(\nabla C)(0)=0$, i.e. $\alpha(0)=\beta(0)=0$, it follows that  $\alpha=\beta=0$ is the only solution to the above system with that initial conditions. Hence, $(\nabla C)(t)=0,\, \forall t$, and so 
\[
X(t)=\sqrt{1-C^2(p_0)}E_1(t).
\]
This relation between $X$ and $E_1$ joint with \eqref{P} and Lemma~\ref{le:simbolos} imply that on the curve $\gamma$ we have the following relations
\begin{gather*}
b_1(t)=\sqrt{1-C^2(p_0)},\quad b_2(t)=b_3(t)=0, \\
\Lambda_1(t)=0,\quad \Lambda_2(t)+\Lambda_3(t)=0,\quad P_{12}(t)=P_{13}(t)=0, \\
\Gamma_{11}^2(t)=\Gamma_{11}^3(t)=\Gamma_{22}^3(t)=\Gamma_{33}^2(t)=0,\quad\lambda_2\Gamma_{32}^1(t)=\lambda_3\Gamma_{23}^1(t).
\end{gather*}

Now, using the above relations and Lemma~\ref{le:simbolos}, the linear system of Lemma~\ref{le:Lambda}  becomes in 
\begin{equation}\label{newsystem}
\begin{split}
2\lambda_2(\Gamma_{22}^1)^2(t)+2(\lambda_2-\lambda_3)(\Gamma_{12}^3)^2(t)+2\lambda_3(\Gamma_{21}^3)^2(t)=-\lambda_2(1+\Lambda_2(t)),\\
-\frac{2\lambda_2^2}{\lambda_3}(\Gamma_{22}^1)^2(t)+2(\lambda_2-\lambda_3)(\Gamma_{12}^3)^2(t)-\frac{2\lambda_3^2}{\lambda_2}(\Gamma_{21}^3)^2(t)=\lambda_3(1-\Lambda_2(t)),\\
\frac{-4\lambda_3}{\lambda_2}(\Gamma_{21}^3)^2(t)=1-2\lambda_2\lambda_3+\frac{\lambda_3+\lambda_2}{\lambda_2-\lambda_3}\Lambda_2(t).
\end{split}
\end{equation}
This means that the functions $(\Gamma_{22}^1)^2(t), (\Gamma_{12}^3)^2(t)$ and $(\Gamma_{21}^3)^2(t)$ satisfy  a linear system whose determinant is  $-16(\lambda_3-\lambda_2)(\lambda_3+\lambda_2)$.

In what follows we are going to consider two cases:

{\it First case: $H=\lambda_2+\lambda_3=0$}. In this case, \eqref{newsystem} becomes in 
\begin{equation}\label{H=0}
\begin{split}
\Lambda_2(t)=0,\\
4(\Gamma_{21}^3)^2(t)=1+2\lambda_2^2,\\
2(\Gamma_{22}^1)^2(t)+4(\Gamma_{12}^3)^2(t)=-\frac{1}{2}+\lambda_2^2.
\end{split}
\end{equation}
Firstly we prove that $C(p_0)=0$. If not, as $\Lambda_2(t)=B_2^2(t)-C(p_0)P_{22}(t)=-C(p_0)P_{22}(t)$, it follows from \eqref{H=0} that $P_{22}(t)=0$ and so $P_{23}(t)^2=1$. Hence
\[
0=P_{22}'(t)=2\Gamma_{12}^3(t)P_{23}(t),
\]
which means that $\Gamma_{12}^3(t)=0$. But, from Lemma~\ref{le:simbolos},  $\Gamma_{22}^1(t)=0$ and so \eqref{H=0} says that $\lambda_2^2=1/2$ and $(\Gamma_{21}^3)^2(t)=1/2$. Finally, from  Lemma~\ref{le:simbolos} it follows that $\lambda_2\Gamma_{21}^3(t)=-b_1P_{23}(t)/2$, which implies using the above information that $b_1^2=1$. This is a contradiction because $b_1^2=1-C^2(p_0)$ and we are assuming that $C(p_0)\not=0$.

Secondly we prove  that the scalar curvature $\rho=1$. From  Lemma~\ref{le:simbolos} we have that
\begin{equation}\label{minima}
P_{22}(t)=-2\lambda_2\Gamma_{22}^1(t),\quad P_{23}(t)=4\lambda_2\Gamma_{12}^3(t)-2\lambda_2\Gamma_{21}^3(t).
\end{equation}
Now using \eqref{H=0}, \eqref{minima} and the fact that $P_{22}^2(t)+P_{23}^2(t)=1$, we obtain that $\Gamma_{12}^3(t)$ satysfies the following non-trivial second order equation 
\[
8\lambda_2^2(\Gamma_{12}^3)^2(t)-16\lambda_2^2\Gamma_{21}^3\Gamma_{12}^3(t)+4\lambda_2^4-1=0.
\]
As  from \eqref{H=0} the function $\Gamma_{21}^3(t)$ is constant, the coefficients of the above polynomial are constant, and so $\Gamma_{12}^3(t)$ is also constant. Now \eqref{H=0} says that  $\Gamma_{22}^1(t)$ is also a constant function and from  \eqref{minima} we get that $P_{22}(t)$ and $P_{23}(t)$ are constant functions too. Hence
\[
0=P_{22}'(t)=2\Gamma_{12}^3P_{23},\quad 0=P_{23}'(t)=-\Gamma_{12}^3P_{33}+\Gamma_{13}^2P_{22}=-2\Gamma_{12}^3P_{22}.
\]
So $\Gamma_{12}^3(t)=0$ and from the above equation  it follows that $\lambda_2^2=1/2$, i.e., the scalar curvature $\rho=1$.

{\it Second case: $H\not=0$}. In this case the system \eqref{newsystem} is compatible and it is easy to check that
\begin{equation}\label{nominima}
\frac{2\lambda_2}{\lambda_3}(\Gamma_{22}^1)^2(t)=-\frac{1+2\lambda_2\lambda_3}{2}+\frac{\lambda_3^2+\lambda_2^2-6\lambda_3\lambda_2}{2(\lambda_3^2-\lambda_2^2)}\Lambda_2(t).
\end{equation}
Firstly we prove that $C(p_0)=0$. If not, from Lemma~\ref{le:simbolos}, it follows that
\[
\Gamma_{22}^1(t)=-\frac{b_1C(P_0)P_{22}(t)}{2\lambda_2C(P_0)}=\frac{b_1\Lambda_2(t)}{2\lambda_2C_0}.
\]
Putting this information in \eqref{nominima}, it follows that $\Lambda_2(t)$ satisfies a  non-trivial polynomial of degree two with constant coefficients. This means that $\Lambda_2(t)$ is a constant function and hence $P_{22}(t)$ and $P_{23}(t)$ are also constant. Using that $P_{22}'(t)=P_{23}'(t)=0$, like in the above minimal case, we get that $\Gamma_{12}^3(t)=0$. But then, the solution of  $\Gamma_{12}^3(t)$ which provides \eqref{newsystem} implies that 
\begin{equation}\label{Lambda2}
\Lambda_2(t)=\frac{(\lambda_3+\lambda_2)(1+2\lambda_3\lambda_2)}{\lambda_3-\lambda_2}.
\end{equation} 
From \eqref{nominima} and \eqref{Lambda2} it follows that 
\[
(\Gamma_{22}^1)^2(t)=-\frac{\lambda_3^2(1+2\lambda_3\lambda_2)}{(\lambda_3-\lambda_2)^2}=-\frac{\lambda_3^2}{\lambda_3^2-\lambda_2^2}\Lambda_2(t).
\]
But we know that 
\[
(\Gamma_{22}^1)^2(t)=\frac{(1-C^2(p_0))\Lambda_2^2}{4\lambda_2^2C^2(p_0)},
\]
which implies that
\[
\Lambda_2\big(\frac{(1-C^2(p_0))\Lambda_2}{4\lambda_2^2C^2(p_0)}+\frac{\lambda_3^2}{\lambda_3^2-\lambda_2^2}\big)=0.
\]
Hence we have two possibilities. If $\Lambda_2=0$, then $P_{22}=0$ and from \eqref{Lambda2} it follows that $\lambda_2\lambda_3=-1/2$ and $\Gamma_{22}^1=0$. Now \eqref{newsystem} says that $\Gamma_{12}^3=0$ and $\Gamma_{21}^3=\lambda_2^2$. Finally, from Lemma~\ref{le:simbolos}, (3) we get that $1-C^2(p_0)=1$ which is a contradiction.

On the other hand, if $\frac{(1-C^2(p_0))\Lambda_2}{4\lambda_2^2C^2(p_0)}+\frac{\lambda_3^2}{\lambda_3^2-\lambda_2^2}=0$, then \eqref{Lambda2} implies that
\[
C^2(p_0)=\frac{(\lambda_3+\lambda_2)^2(1+2\lambda_3\lambda_2)}{(\lambda_3+\lambda_2)^2+2\lambda_3\lambda_2(\lambda_3^2+\lambda_2^2)}>1,
\]
which is also  a contradiction. Hence we have proved that $C(p_0)=0$.

Secondly we prove that the scalar curvature $\rho=1$.  In this case, as $$\Lambda_2(t)=b_2^2(t)-C(p_0)P_{22}(t)=0,$$  \eqref{newsystem} becomes in the following equations
\begin{gather*}
(\Gamma_{21}^3)^2(t)=\frac{\lambda_2(2\lambda_2\lambda_3-1)}{4\lambda_3},\\
-2\lambda_2(\Gamma_{22}^1)^2(t)+2(\lambda_3-\lambda_2)(\Gamma_{12}^3)^2(t)=\lambda_2(\lambda_2\lambda_3+1/2),\\
\frac{2\lambda_2^2}{\lambda_3}(\Gamma_{22}^1)^2(t)+2(\lambda_3-\lambda_2)(\Gamma_{12}^3)^2(t)=-\lambda_3(\lambda_2\lambda_3+1/2),
\end{gather*}
which implies that
\[
(\Gamma_{22}^1)^2(t)=\frac{-\lambda_3(\lambda_2\lambda_3+1/2)}{2\lambda_2},\quad (\Gamma_{12}^3)^2(t)=-\frac{(\lambda_2\lambda_3+1/2)}{2}.
\]
As $(\Gamma_{22}^1)^2(t)$ and $(\Gamma_{12}^3)^2(t)$ are non-negative functions, the above equations say that $\lambda_2\lambda_3+1/2=0$ and so the scalar curvature of $M$ is $1$.
\end{proof}

\begin{corollary}\label{co:compact}
Let $\Phi:M\rightarrow\s^2\times\s^2$ be an orientable compact hypersurface with three different constant principal curvatures. If the scalar curvature $\rho$ satisfies $2\rho\not=1$ and the Gauss-Kronecker curvature $K=0$, then $\Phi(M)$ is congruent to $M_t$ for some $t\in(-1,1)$.
\end{corollary}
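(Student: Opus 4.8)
The plan is to use the compactness of $M$ together with both parts of Theorem~\ref{te:3} to force the function $C$ to vanish identically, and then to conclude with Corollary~\ref{co:Cconstant}.

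First observe that, since $M$ has three constant principal curvatures $\lambda_1,\lambda_2,\lambda_3$, the mean curvature $3H=\lambda_1+\lambda_2+\lambda_3$ and, by the Gauss equation, the scalar curvature $\rho=2+9H^2-|\sigma|^2$ are automatically constant. Thus all the hypotheses of Theorem~\ref{te:3} are in force (we are given $2\rho\neq 1$ and $K=0$), and part~(1) yields $C^2<1$ at every point of $M$.

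Next, because $M$ is compact, the smooth function $C$ attains its maximum at some point $p_{\max}$ and its minimum at some point $p_{\min}$, and both are critical points of $C$. By Theorem~\ref{te:3}(2), at any critical point of $C$ one has either $C^2=1$ or $C=0$, and the first alternative is excluded by the previous paragraph. Hence $C(p_{\max})=C(p_{\min})=0$, and since $C(p_{\min})\le C\le C(p_{\max})$ on all of $M$, we obtain $C\equiv 0$. Finally, $M$ is then a compact orientable hypersurface with constant mean curvature and with $C$ constant, so Corollary~\ref{co:Cconstant}(1) applies: $\Phi(M)$ is, up to congruence, one of the hypersurfaces $\s^1(r)\times\s^2$ with $r\in(0,1]$, one of the $M_t$ with $t\in(-1,1)$, or a compact covering of one of these. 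The members of the first family have $C=\pm1$ and at most two distinct principal curvatures, so neither they nor their coverings are compatible with $C\equiv 0$ and three distinct principal curvatures. Therefore $\Phi(M)$ is congruent to $M_t$ for some $t\in(-1,1)$.

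I do not expect a genuine obstacle in this corollary: the delicate analysis — the ODE system satisfied by the quantities $P_{ij}$ and $\Gamma_{ij}^k$ along the integral curves of $E_1$ — has already been carried out in the proof of Theorem~\ref{te:3}. What remains is a short deduction, and the only point requiring (minor) attention is discarding the $\s^1(r)\times\s^2$ family and the covering possibilities, which is immediate once $C\equiv 0$ is known and the number of distinct principal curvatures equals three.
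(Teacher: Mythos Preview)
Your argument is correct and follows exactly the paper's route: Theorem~\ref{te:3}(1) gives $C^2<1$, compactness plus Theorem~\ref{te:3}(2) forces $C\equiv 0$, and then Corollary~\ref{co:Cconstant} finishes. The only difference is that you spell out the elimination of the $\s^1(r)\times\s^2$ family and the covering alternative, which the paper leaves implicit.
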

\begin{proof}
From Theorem~\ref{te:3}, (1),  the function $C$ satisfies $C^2<1$. As $M$ is compact, from Theorem~\ref{te:3}, (2),  the maximum and the minimum of $C$ is zero. So $C\equiv 0$ and the resul follows from Corollary~\ref{co:Cconstant}.
\end{proof}

\end{document}